\newcolumntype{L}[1]{>{\raggedright\let\newline\\\arraybackslash\hspace{0pt}}m{#1}}
\newcolumntype{C}[1]{>{\centering\let\newline\\\arraybackslash\hspace{0pt}}m{#1}}
\newcolumntype{R}[1]{>{\raggedleft\let\newline\\\arraybackslash\hspace{0pt}}m{#1}}
\newcommand{\R}{\mathbb{R}}
\renewcommand{\vec}[1]{\boldsymbol{#1}}
\newcommand{\F}{\overline{F}}
\newcommand{\E}{\mathbb{E}}
\newcommand{\N}{\mathbb{N}}
\newcommand{\U}{\mathrm{U}}
\newcommand{\as}{\overset{\text{a.s.}}{=}}
\newcommand{\dif}{\mathrm{d}}
\newcommand{\abs}[1]{|#1|}
\newcommand{\RR}{\mathbb{R}}
\newcommand{\p}{\mathbb{P}}
\newcommand{\id}{\vec{1}}
\theoremstyle{plain}
\newtheorem{theorem}{Theorem}[section]
\newtheorem{corollary}[theorem]{Corollary}
\newtheorem{lemma}[theorem]{Lemma}
\newtheorem{proposition}[theorem]{Proposition}
\theoremstyle{definition}
\newtheorem{example}[theorem]{Example} 
\theoremstyle{remark}
\newtheorem{remark}[theorem]{Remark}
\begin{document}  

\title{Centers of probability measures without the mean}

\author{%
  Giovanni Puccetti\footnote{Department of Economics, Management and Quantitative Methods, University of Milano, Italy.}, \quad Pietro Rigo\footnote{Department of Mathematics, University of Pavia, Italy.},\quad
Bin Wang\footnote{Key Laboratory of Random Complex Structures and Data Science, Academy of Mathematics and Systems Science, Chinese Academy of Sciences, Beijing, China.},\quad
Ruodu Wang\footnote{Department of Statistics and Actuarial Science, University of Waterloo, Waterloo, Canada.}}

 \date{\today}
\maketitle

\begin{abstract}
In the recent years, the notion of mixability has been developed with applications to optimal transportation, quantitative finance and operations research. 
An $n$-tuple of distributions is said to be jointly mixable if there exist $n$ random variables following these distributions and adding up to a constant, called center, with probability one. When the $n$ distributions are identical, we speak of complete mixability. If each distribution has finite mean, the center is obviously the sum of the means. In this paper, we investigate the set of centers of completely and jointly mixable distributions not having a finite mean. In addition to several results, we show the (possibly counterintuitive) fact that, for each $n \geq 2$, there exist
$n$ standard Cauchy random variables adding up to a constant $C$ if and only if
$$|C|\le\frac{n\,\log (n-1)}{\pi}.$$ 

\textbf{MSC2000 subject classification}: Primary 60E05, Secondary 90B30.


\textbf{Keywords}: Cauchy distribution; Complete mixability; Joint mixability; Multivariate dependence
   \end{abstract}
 
\section{Introduction}\label{intro}


In the recent years, the field of complete and joint mixability \cite{WW16} has been rapidly developing.
Mixability serves as a building block for the solutions of many 
optimization problems under marginal-distributional constraints. Applications are found in optimal transportation \cite{lR13}, quantitative finance \cite{EPR13,BRV14} and operations research \cite{uuH15,BKP16}. 

In this paper, we study the set of centers of completely and jointly mixable distributions. Our main result (Theorem~\ref{th:cauchy}) is that $n$ standard Cauchy random variables can add up to a constant $C$ if and only if
\begin{gather}\label{ch7y}
\abs{C}\le\frac{n\,\log (n-1)}{\pi}.
\end{gather}
Even if apparently innocuous, the proof (or at least our proof) of such a result is quite involved.

To be formal, let $\Gamma(\mu_1,\ldots,\mu_n)$ be the collection of probability measures on $\mathbb{R}^n$ having one-dimensional marginals $\mu_1,\ldots,\mu_n$, where $\mu_1,\ldots,\mu_n$ are probability measures on $\mathbb{R}$. Denote by $\mu$ the standard Cauchy distribution. Then, for any $n\ge 2$ and $C\in\R$, there exists $\lambda\in\Gamma\bigl(\mu,\dots,\mu)$ such that
\begin{gather}\label{nm9hy6}
\lambda\big(\bigl\{x\in\mathbb{R}^n:x_1+\ldots+x_n=C\bigr\}\big)=1,
\end{gather}
if and only if $C$ satisfies condition \eqref{ch7y}. In general, for any probability measure $\mu$ on $\mathbb{R}$, the set of $C\in\R$ satisfying~\eqref{nm9hy6} for some $\lambda\in\Gamma(\mu,\ldots,\mu)$ is compact (Proposition~\ref{th:metric}) and is finite in case $\mu$ is discrete (Proposition~\ref{disc}).

The existence of a probability measure $\lambda$ with given marginals and satisfying~\eqref{nm9hy6} is meaningful with respect to the notions of \emph{complete mixability} and \emph{joint mixability}, as introduced in \cite{WW11} and \cite{WPY13}. Let $\mu,\mu_1,\dots,\mu_n$ be probability measures on $\RR$. The $n$-tuple $(\mu_1,\dots,\mu_n)$ is said to be \emph{jointly mixable} (JM) if condition~\eqref{nm9hy6} holds for some $C\in\mathbb{R}$ and some $\lambda\in\Gamma(\mu_1,\ldots,\mu_n)$. In this case, $C$ is called a \emph{center} of $(\mu_1,\dots,\mu_n)$. Similarly, $\mu$ is $n$-\emph{completely mixable} ($n$-CM) if there exist $C\in\mathbb{R}$ and $\lambda\in\Gamma(\mu,\ldots,\mu)$ satisfying condition \eqref{nm9hy6}. In this case, $C/n$ is an $n$-center of $\mu$. Clearly, $C/n$ coincides with the mean of $\mu$ provided the latter exists and is finite. In this sense, the notion of $n$-center can be seen as a generalization of the notion of mean. The term \emph{$n$-center} is used to stress the dependence on $n$. However, the ``$n$-" notation will be dropped when clear from the context.

The historical motivation for investigating mixability was to minimize var$\bigl(\sum_{i=1}^nX_i\bigr)$, where $X_1,\ldots,X_n$ are real random variables with given marginal distributions. In fact, the idea of building random variables with constant sum, or at least whose sum has minimum variance, goes back to~\cite{GR81}, where complete mixability of the uniform distribution was shown. Random sums with minimal variance were further investigated in~\cite{RU02b}, where complete mixability of symmetric unimodal distributions was established. Complete mixability and joint mixability of distributions with monotone densities are characterized in \cite{WW11} and \cite{WW16}, respectively. From an analytical viewpoint, mixability can be seen as an extension of the concept of countermonotonicity (negative dependence) in dimensions $n \geq 3$ and further mathematical properties are collected in~\cite{PW15}.

In this paper, in addition to the results mentioned above, various other useful facts are proved. Amongst them, we mention Example~\ref{ex:01}, which provides the first (to our knowledge) explicit construction of two joint (complete) mixes having the same marginal distributions and different centers. 

A last remark (connected with our main result) is that, still today, the Cauchy distribution continues to exhibit
some rather unexpected properties; see e.g. \cite{MR3546444}.

\subsection*{Notation} Throughout this paper, $n$ is a positive integer. For any $A\subset\RR^n$, we say ``a probability measure on $A$" to mean ``a probability measure on the Borel $\sigma$-field of $A$". We write $X\sim\nu$ to mean that $\nu$ is the probability distribution of the random variable $X$ and $X\sim Y$ to mean that $X$ and $Y$ have the same law. We always denote by $\mathcal{B}$ the Borel $\sigma$-field of $\R$ and by $\mu$ (with or without indices) a probability measure on $\mathcal{B}$. Also, for any set $Z$, $\delta_z$ stands for the point mass at $z\in Z$.

For $x\in\mathbb{R}^n$ and $i=1,\ldots,n,$ the $i$-th coordinate of $x$ is denoted by $x_i$. If $\lambda$ is a probability measure on $\mathbb{R}^n$, the $i$-th one-dimensional marginal of $\lambda$ is the probability measure on $\mathbb{R}$ given by $A\mapsto\lambda\big(\bigl\{x\in\mathbb{R}^n:x_i\in A\bigr\}\big)$.

Finally, all random variables are defined on a common probability space $(\Omega,\mathcal F,\p)$.

\section{(Non)-Uniqueness of the center}\label{subse:2.2}

A \emph{joint mix} for $(\mu_1,\ldots,\mu_n)$ with center $C$ is an $n$-tuple $(X_1,\ldots,X_n)$ of real random variables such that $X_i\sim\mu_i$, $1 \leq i \leq n$, and $\sum_{i=1}^nX_i\as C$. Similarly, $(X_1,\ldots,X_n)$ is a $n$-\emph{complete mix} for $\mu$ with center $c$ if $X_i\sim\mu$,$1 \leq i \leq n$,  and $\sum_{i=1}^nX_i\as nc$.

Not all probability measures on $\R$ are completely mixable. For instance, $\mu$ is necessarily symmetric if it is 2-CM and centered at 0. Or else, $\mu$ is not $n$-CM for any $n$ if the support of $\mu$ is bounded above (below) but not below (above). For a broad list of jointly and completely mixable distributions we refer to~\cite{PW15} and the references therein. Here, we start by noting that the existence of a joint mix always delivers the existence of a complete mix (with average marginal distribution) and that a complete mix with a given marginal can always be taken to be exchangeable.

\begin{proposition}\label{bv6r}
\begin{enumerate}[(i)]
\item If $(\mu_1,\dots,\mu_n)$ is JM with center $C$, then $\mu=(\mu_1+\dots+\mu_n)/n$
is $n$-CM with center $C/n$.
\item Each $n$-CM probability measure on $\R$ admits an exchangeable $n$-complete mix.
\end{enumerate}
\end{proposition}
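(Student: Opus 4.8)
For part (i), the plan is to start from a joint mix $(X_1,\dots,X_n)$ for $(\mu_1,\dots,\mu_n)$ with center $C$, so that $X_i\sim\mu_i$ and $\sum_{i=1}^n X_i \as C$. The idea is to symmetrize the index: introduce a random permutation $\sigma$ of $\{1,\dots,n\}$, uniformly distributed on the symmetric group and independent of $(X_1,\dots,X_n)$, enlarging the probability space if necessary. Define $Y_i = X_{\sigma(i)}$ for $i=1,\dots,n$. Then $\sum_{i=1}^n Y_i = \sum_{i=1}^n X_i \as C$, since a permutation merely reorders the summands, so $(Y_1,\dots,Y_n)$ has constant sum $C$ as well. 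It remains to identify the common marginal law of the $Y_i$: conditioning on $\sigma$ and using independence, for any Borel set $A$ we get $\PP(Y_i \in A) = \frac1{n!}\sum_{\pi} \PP(X_{\pi(i)}\in A)$, and as $\pi$ ranges over all permutations, $\pi(i)$ takes each value in $\{1,\dots,n\}$ exactly $(n-1)!$ times, so $\PP(Y_i\in A) = \frac1n\sum_{j=1}^n \mu_j(A) = \mu(A)$. Hence each $Y_i\sim\mu$ and $\sum Y_i \as C = n\,(C/n)$, which is exactly the statement that $\mu$ is $n$-CM with center $C/n$.

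For part (ii), suppose $\mu$ is $n$-CM with center $c$, so there is an $n$-complete mix $(X_1,\dots,X_n)$ with $X_i\sim\mu$ and $\sum_{i=1}^n X_i \as nc$. Apply the same randomized-permutation trick: with $\sigma$ uniform on the symmetric group and independent of $(X_1,\dots,X_n)$, set $Y_i = X_{\sigma(i)}$. By construction the law of $(Y_1,\dots,Y_n)$ is invariant under permutations of coordinates, i.e.\ $(Y_1,\dots,Y_n)$ is exchangeable; the sum is unchanged, $\sum Y_i \as nc$; and by the computation above each $Y_i\sim\mu$ (here every $\mu_j=\mu$, so the averaging is trivial). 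Thus $(Y_1,\dots,Y_n)$ is an exchangeable $n$-complete mix for $\mu$ with the same center $c$.

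I do not anticipate a serious obstacle here; both parts reduce to the same averaging-over-permutations device, and the only points requiring mild care are (a) enlarging the probability space to accommodate the independent permutation $\sigma$ — this is harmless since all random variables in the paper are allowed to live on a common space that can be taken rich enough — and (b) checking measurability and applying the conditioning argument to compute the marginals, which is routine. One could alternatively phrase the whole argument at the level of probability measures $\lambda\in\Gamma(\mu_1,\dots,\mu_n)$ by pushing $\lambda$ forward under coordinate permutations and averaging, entirely avoiding any mention of an enlarged space; I would present whichever version reads more cleanly alongside the surrounding material.
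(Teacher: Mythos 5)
Your argument is essentially identical to the paper's proof: the paper also symmetrizes a joint mix by composing with a uniform random permutation independent of $(X_1,\dots,X_n)$, computes the marginals as the average $\mu=(\mu_1+\dots+\mu_n)/n$ via the same counting over permutations, and obtains part (ii) by specializing to $\mu_1=\dots=\mu_n$ and observing the resulting vector is exchangeable. Both the construction and the verification match; no gaps.
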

\begin{proof}
(i) Let $X=(X_1,\dots,X_n)$ be a joint mix for $(\mu_1,\dots,\mu_n)$ with center $C$. Define
\begin{gather*}
Z=(X_{\pi_1},\ldots,X_{\pi_n})
\end{gather*}
where $\pi=(\pi_1,\dots,\pi_n)$ is a uniform random permutation of $\{1,\ldots,n\}$ independent of $X$.
A uniform random permutation of $\{1,\ldots,n\}$ is a random permutation $\pi$ such that $\p(\pi=\sigma)=1/n!$ for each permutation $\sigma$ of $\{1,\ldots,n\}$. Then, $\sum_{i=1}^n Z_i=\sum_{i=1}^n X_{\pi_i}=\sum_{i=1}^n X_i\as C$. By independence of $X$ and $\pi$ and recalling that $X_i\sim\mu_i$, one obtains
\begin{gather*}
\p(Z_i\in A)=\sum_{\sigma\in P_n}\p(X_{\sigma_i}\in A,\,\pi=\sigma)=\frac{1}{n!}\,\sum_{\sigma\in P_n}\p(X_{\sigma_i}\in A)=\frac{1}{n}\sum_{i=1}^n\p(X_i\in A)=\mu(A),
\end{gather*}
for each $i$ and $A\in\mathcal{B}$, where $P_n$ is the set of all permutations of $\{1,\ldots,n\}$. Therefore, $\mu$ is $n$-CM with center $C/n$ and $Z$ is a joint mix for $\mu$.

(ii) Given an $n$-CM probability $\nu$ on $\R$, take $\mu_1=\dots=\mu_n=\nu$ in (i). Then, $Z$ is an exchangeable joint mix for $\nu$.
\end{proof}

The next example, even if obvious, is helpful in proving Theorem~\ref{th:cauchy} below.

\begin{example}\label{bh9ik7}
Let $\nu$ and $\gamma$ be probability measures on $\R$. Suppose $\nu$ is $k$-CM and $\gamma$ is $(n-k)$-CM, where $1\le k< n$. Define $\mu_i=\nu$ for $1\le i\le k$ and $\mu_i=\gamma$ for $k< i\le n$. Then, $(\mu_1,\ldots,\mu_n)$ is clearly JM, so that
$$\frac{k\nu+(n-k)\gamma}{n}$$
is $n$-CM by Proposition \ref{bv6r}. In particular, $\frac{k\delta_x+(n-k)\delta_y}{n}$ is $n$-CM for any $x,y\in\R$.
\end{example}
\bigskip

An intriguing question is whether the center of mixable distributions is unique. Obviously, if $(\mu_1,\dots,\mu_n)$ is JM and each $\mu_i$ has finite mean, then $(\mu_1,\dots,\mu_n)$ has a unique center $C$, namely $C=\sum_{i=1}^n\int x\,\mu_i(dx)$. Analogously, if $\mu$ is $n$-CM and has finite mean, $\int x\,\mu(dx)$ is the only center of $\mu$. Uniqueness of the center is also clear for $n=1$, since $\mu$ is 1-CM if and only if it is degenerate.

In view of~\cite{gS77}, if $X$ and $Y$ are real random variables such that $\E(X+Y)$ exists (finite or infinite) then $\E(X+Y)$ depends only on the marginal distributions of $X$ and $Y$, in the sense that $\E(U+V)=\E(X+Y)$ provided $U\sim X$, $V\sim Y$ and $\E(U+V)$ exists. It follows that the center is unique for $n=2$. This fact also admits an obvious direct proof: if $(X_1,X_2)$ and $(Y_1,Y_2)$ are joint mixes for $(\mu_1,\mu_2)$ with $X_1+X_2\as C_1$ and $Y_1+Y_2\as C_2$, then
$$
X_2-C_1\as -X_1 \sim -Y_1\as Y_2-C_2 \sim X_2 -C_2,
$$
which clearly implies $C_1=C_2$. More generally, one obtains the following result.

\begin{proposition}\label{th:th1-2}
Suppose that $(\mu_1,\dots,\mu_n)$ is JM and at least $n-2$ of $\mu_1,\dots,\mu_n$ have finite mean.
Then, the center of $(\mu_1,\dots,\mu_n)$ is unique.
\end{proposition}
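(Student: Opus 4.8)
The plan is to reduce the general case to the $n=2$ case, which is already settled in the excerpt, by ``absorbing'' the $n-2$ marginals with finite mean into a constant shift. Suppose without loss of generality that $\mu_3,\dots,\mu_n$ each have finite mean, say $m_i=\int x\,\mu_i(\dif x)$ for $3\le i\le n$, and put $m=\sum_{i=3}^n m_i$. Let $(X_1,\dots,X_n)$ and $(Y_1,\dots,Y_n)$ be two joint mixes for $(\mu_1,\dots,\mu_n)$ with centers $C$ and $C'$ respectively, so that $\sum_{i=1}^n X_i\as C$ and $\sum_{i=1}^n Y_i\as C'$. I would then look at the pairs $(X_1,X_2+\sum_{i=3}^n X_i)$ and $(Y_1,Y_2+\sum_{i=3}^n Y_i)$: each is a joint mix for the \emph{two}-tuple $(\mu_1,\nu_X)$ respectively $(\mu_1,\nu_Y)$, where $\nu_X$ is the law of $X_2+\sum_{i\ge 3}X_i$ and $\nu_Y$ is the law of $Y_2+\sum_{i\ge 3}Y_i$. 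The trouble is that $\nu_X$ and $\nu_Y$ need not coincide, so this is not immediately a two-tuple uniqueness statement; the point of the finite-mean hypothesis is precisely that it forces the relevant means to agree.

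Concretely, from $\sum_{i=1}^n X_i\as C$ we get $X_1+X_2\as C-\sum_{i=3}^n X_i$. Taking expectations formally, $\E(X_1+X_2)=C-m$ whenever the left side exists; but $\mu_1,\mu_2$ need not have a mean, so one should instead argue distributionally. The cleanest route is the same ``reflection'' trick used in the excerpt for $n=2$. Set $S_X=\sum_{i=3}^n X_i$ and $S_Y=\sum_{i=3}^n Y_i$; note $\E S_X=\E S_Y=m$ since all the summands have finite mean (this uses only linearity of expectation, not independence). From the two mixes,
\begin{gather*}
X_1+X_2\as C-S_X,\qquad Y_1+Y_2\as C'-S_Y .
\end{gather*}
Now $X_1\sim\mu_1\sim Y_1$, hence $X_2\as (C-S_X)-X_1$ and $Y_2\as (C'-S_Y)-Y_1$. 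Since also $X_2\sim\mu_2\sim Y_2$, the variables $(C-S_X)-X_1$ and $(C'-S_Y)-Y_1$ have the same law. The key step is to extract a scalar identity from this equality in law. Here I would invoke Strassen/Sklar-type reasoning only if needed; more simply, I would use that $S_X$ and $S_Y$ have finite mean together with the following observation: if $U$ and $V$ are random variables with the same distribution and $U=a-W_1$, $V=b-W_2$ with $W_1\sim W_2$ and $W_1$ (equivalently $W_2$) arbitrary but $a-b$ constant while the random parts $W_1,W_2$ absorb everything else, then comparing a truncated-mean functional of both sides yields $a=b$. The honest way to do this without any integrability of $\mu_1,\mu_2$ is to quote \cite{gS77} exactly as the excerpt does: for two real random variables $P,Q$ with $\E(P+Q)$ existing (finite or infinite), $\E(P+Q)$ depends only on the marginals of $P$ and $Q$.

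So the actual argument I would write is: apply the \cite{gS77} fact to $P=X_1$, $Q=X_2+S_X$. Since $X_2+S_X\as C-X_1$, we have $X_1+(X_2+S_X)\as C$, so $\E\bigl(X_1+(X_2+S_X)\bigr)=C$ exists and equals $C$. By \cite{gS77} this value depends only on the laws $\mu_1$ and $\nu_X:=\mathrm{law}(X_2+S_X)$. Do the same for the second mix with $\nu_Y:=\mathrm{law}(Y_2+S_Y)$, getting $C'$ as the corresponding value. It remains to show $\nu_X=\nu_Y$, which will again follow by applying \cite{gS77}, this time to the pair $X_2$ and $S_X$: since $S_X$ has finite mean, $\E(X_2+S_X)$ exists iff $\E(X_2)$ exists, and in any case the \emph{distribution} $\nu_X$ is determined by $\mu_2$ and the law of $S_X$ only up to the coupling — so this sub-step does not by itself give $\nu_X=\nu_Y$ either. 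The genuine main obstacle is exactly this: the laws $\nu_X,\nu_Y$ depend on the unknown couplings, so one cannot simply reduce to $n=2$ and invoke uniqueness there. The resolution is to bypass $\nu_X,\nu_Y$ entirely and work with means. Writing $S_X\as C-X_1-X_2$ and using $\E S_X=m$: apply \cite{gS77} to the triple-to-pair grouping $\bigl(X_1+X_2\bigr)+S_X\as C$, so that $\E\bigl((X_1+X_2)+S_X\bigr)=C$; since $\E S_X=m$ is finite, the additivity part of \cite{gS77} gives $\E(X_1+X_2)=C-m$ (as an extended-real number, and here it must be finite and equal to $C-m$). Identically $\E(Y_1+Y_2)=C'-m$. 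But $\E(X_1+X_2)$ and $\E(Y_1+Y_2)$ depend only on $\mu_1,\mu_2$ by \cite{gS77}, hence are equal, giving $C-m=C'-m$, i.e. $C=C'$. I would present the proof in this last form; the only care needed is to state the additivity property of the \cite{gS77} expectation cleanly (that $\E(P+R)=\E(P)+\E(R)$ whenever $\E R$ is finite and $\E(P+R)$ exists), which is elementary once one recalls that the relevant expectation is the Lebesgue integral of $P+R$ when it is well defined.
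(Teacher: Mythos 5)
Your final argument is correct and is essentially the paper's own proof: use integrability of the $n-2$ marginals with finite mean to conclude that the sum of the remaining two variables is integrable, then invoke \cite{gS77} to see that its expectation depends only on those two marginals, which pins down $C$. The preliminary detours about $\nu_X,\nu_Y$ are unnecessary, but the argument you settle on in the last paragraph is the one in the paper, up to relabeling of indices.
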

\begin{proof}
Let $n> 2$ and let $(X_1,\dots,X_n)$ be a joint mix for $(\mu_1,\dots,\mu_n)$ with center $C$. Without loss of generality, assume that $\mu_1,\dots,\mu_{n-2}$ have finite mean. Then, $\sum_{i=1}^{n-2}X_i$ is integrable and $X_{n-1}+X_n\as C-\sum_{i=1}^{n-2}X_i$. Thus, $\E(X_{n-1}+X_n)$ is finite, so that $\E(X_{n-1}+X_n)$ only depends on on $\mu_{n-1}$ and $\mu_n$ (because of \cite{gS77}). Hence,
$$C=\sum_{i=1}^{n-2}\E(X_i)+\E(X_{n-1}+X_n)$$
is the only center of $(\mu_1,\dots,\mu_n)$. \end{proof}

Another uniqueness criterion can be obtained by increasing $n-2$ to $n-1$ but replacing in the above proposition the existence of the mean with the slightly weaker condition
\begin{gather}\label{wlln}
\lim_{x\rightarrow \infty}x\,\mu(\{y\in \R: |y|>x\})=0.
\end{gather}

\begin{proposition}\label{th:th-weak}
Suppose that $(\mu_1,\dots,\mu_n)$ is JM and at least $n-1$ of $\mu_1,\dots,\mu_n$ satisfy condition \eqref{wlln}.
Then, the center of $(\mu_1,\dots,\mu_n)$ is unique.
\end{proposition}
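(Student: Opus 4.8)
The plan is to derive the statement from the classical weak law of large numbers (WLLN) in its mean-free form: if $Z_1,Z_2,\dots$ are i.i.d.\ with common law $\nu$ satisfying $x\,\nu(\{|y|>x\})\to0$ as $x\to\infty$, then $\frac1N\sum_{j=1}^N Z_j-m_\nu(N)\to0$ in probability, where $m_\nu(N):=\int_{\{|y|\le N\}}y\,\nu(dy)$. Condition~\eqref{wlln} is exactly the hypothesis of this theorem, which is why it is the natural weakening of integrability in the present setting.

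Let $(X_1,\dots,X_n)$ and $(Y_1,\dots,Y_n)$ be joint mixes for $(\mu_1,\dots,\mu_n)$ with centers $C$ and $C'$; without loss of generality, $\mu_1,\dots,\mu_{n-1}$ satisfy~\eqref{wlln}, so that $\mu_n$ is the only ``free'' marginal. First I would take an i.i.d.\ sequence $\bigl((X_1^{(j)},\dots,X_n^{(j)})\bigr)_{j\ge1}$ of copies of the first joint mix; such a sequence exists because a joint mix is merely a probability measure on $\R^n$, from which one may sample i.i.d. For each fixed $i\le n-1$ the variables $(X_i^{(j)})_{j\ge1}$ are i.i.d.\ with law $\mu_i$, so the WLLN gives $\frac1N\sum_{j=1}^N X_i^{(j)}-m_{\mu_i}(N)\to0$ in probability. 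Since each copy is a mix with center $C$, for every $N$ we have $\sum_{i=1}^n\sum_{j=1}^N X_i^{(j)}=NC$ almost surely; dividing by $N$, isolating the $n$-th coordinate, and using that a finite sum of quantities tending to $0$ in probability also tends to $0$ in probability, we obtain
$$\frac1N\sum_{j=1}^N X_n^{(j)}\;-\;\Bigl(C-\sum_{i=1}^{n-1}m_{\mu_i}(N)\Bigr)\longrightarrow 0\quad\text{in probability.}$$
Running the same argument for the second joint mix yields i.i.d.\ copies $(Y_i^{(j)})$ with $\frac1N\sum_{j=1}^N Y_n^{(j)}-\bigl(C'-\sum_{i=1}^{n-1}m_{\mu_i}(N)\bigr)\to0$ in probability.

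Finally I would invoke a soft concentration principle. The averages $\frac1N\sum_{j=1}^N X_n^{(j)}$ and $\frac1N\sum_{j=1}^N Y_n^{(j)}$ have the same law $\rho_N$, namely that of the average of $N$ i.i.d.\ random variables with law $\mu_n$. Writing $a_N:=C-\sum_{i=1}^{n-1}m_{\mu_i}(N)$ and $b_N:=C'-\sum_{i=1}^{n-1}m_{\mu_i}(N)$, the two displayed limits say that $\rho_N([a_N-\varepsilon,a_N+\varepsilon])\to1$ and $\rho_N([b_N-\varepsilon,b_N+\varepsilon])\to1$ for every $\varepsilon>0$. If $C\ne C'$, then $a_N-b_N=C-C'$ is a fixed nonzero real, so taking $\varepsilon<|C-C'|/2$ makes these two intervals disjoint for all $N$; but a probability measure cannot assign mass arbitrarily close to $1$ to each of two disjoint sets, a contradiction. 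Hence $C=C'$ and the center is unique.

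The one genuinely delicate ingredient is the mean-free WLLN, together with the observation that the ``leftover'' average $\frac1N\sum_{j=1}^N X_n^{(j)}$ has a law depending only on $\mu_n$: this is exactly what lets us compare the two candidate centers, even though the partial sums $\sum_{i=1}^{n-1}X_i$ and $\sum_{i=1}^{n-1}Y_i$ may have entirely different joint --- and hence marginal --- distributions under the two mixes. Everything else (existence of the i.i.d.\ copies, stability of convergence in probability under finite sums, the disjoint-intervals contradiction) is routine.
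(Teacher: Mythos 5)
Your proof is correct, and it rests on the same engine as the paper's: i.i.d.\ copies of the joint mix combined with the mean-free weak law of large numbers, with the truncated means $m_{\mu_i}(N)$ playing the role of marginal-determined centering constants. The difference is in how the conclusion is extracted. The paper first shows that $\mu_n$ \emph{inherits} condition \eqref{wlln} from the other $n-1$ marginals via the bound $\p(\abs{X_n}>x)\le\sum_{i=1}^{n-1}\p(\abs{X_i}>x/n)$ for $x\ge n\abs{C}$; it then applies the WLLN to all $n$ coordinates of a single mix and reads off $C=\lim_m c_m$, where $c_m=\sum_{i=1}^n\E(X_i\id_{\{|X_i|\le m\}})$ depends only on the marginals, so uniqueness is immediate. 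You instead apply the WLLN only to the $n-1$ good coordinates, solve for the $n$-th average from the a.s.\ constraint, and exploit the observation that the law $\rho_N$ of that average is determined by $\mu_n$ alone; two distinct centers would force $\rho_N$ to concentrate near two points at fixed distance $\abs{C-C'}$, which is impossible. Your route saves the (short but genuine) step of verifying that $\mu_n$ satisfies \eqref{wlln}, at the cost of a slightly longer endgame comparing two mixes rather than identifying the center as an explicit limit; the paper's version has the minor bonus of producing the formula $C=\lim_m c_m$. Both arguments are complete and correct.
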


\begin{proof}
Let $(X_1,\dots,X_n)$ be a joint mix for $(\mu_1,\dots,\mu_n)$ with center $C$. Without loss of generality, assume that $\mu_1,\dots,\mu_{n-1}$ satisfy condition \eqref{wlln}.
Then, $\mu_n$ also satisfies condition \eqref{wlln}. In fact, for every $x\ge n\,\abs{C}$, one obtains
$$\p(\abs{X_n}>x)=\p\bigl(\abs{C-\sum_{i=1}^{n-1}X_i}>x\bigr)\le\sum_{i=1}^{n-1}\p\bigl(\abs{X_i}>x/n\bigr).$$
Hence, $x\,\p(\abs{X_n}>x)\rightarrow 0$ as $x\rightarrow\infty$.

Now take iid copies of $(X_1,\cdots,X_n)$, denoted by
$\{(X_{1,k},\cdots,X_{n,k})\}_{k=1}^\infty$.
For $m\in \N$, let $c_{i,m}=\E(X_i\id_{\{|X_i|\leq m\}})$, $i=1,\dots,n$, and  $c_m=\sum_{i=1}^{n}c_{i,m}$. By condition \eqref{wlln} and the weak law of large numbers, we have $\frac{1}{m}\sum_{k=1}^m X_{i,k}-c_{i,m}\overset{\p}\longrightarrow 0$, as $m\to \infty$, for fixed $i=1,\dots,n$.
It follows that
\begin{eqnarray*}
C-c_m&=&\sum_{i=1}^n\,\Bigl(\frac{1}{m}\sum_{k=1}^m X_{i,k}-c_{i,m}\Bigr)\overset{\p}\longrightarrow 0\quad\text{as }m\to \infty.
\end{eqnarray*}
Therefore, $C=\lim_mc_m$ is unique.
\end{proof}

Contrary to the cases $n=1$ and $n=2$, a JM $n$-tuple of distributions may have more than one center of $n\geq 3$.

Recall that the standard Cauchy distribution is the probability measure on $\R$ with density $f(x)=\frac{1}{\pi} \frac{1}{1+x^2}$ with respect to the Lebesgue measure. Let Cauchy($\sigma$) denote the distribution of $\sigma\,X$, where $\sigma>0$ and $X$ has the standard Cauchy distribution. In~\cite{CS83}, Chen and Shepp show the existence of two Cauchy($4$) random variables $U,V$ and a constant $C \neq 0$ such that $U+V+C$ is Cauchy($4\sqrt{2}$). Thus, $(-U,-V,U+V+C)$ and $(U,V,-U-V-C)$ are both joint mixes for the triplet $\bigl($Cauchy($4$), Cauchy($4$), Cauchy($4\sqrt{2})\bigr)$ with centers $C$ and $-C$, respectively. From Proposition~\ref{bv6r}, one also obtains a 3-CM probability measure with an interval of centers.

\begin{example}[A probability measure with an interval of centers]\label{ex:0}
Let $\nu=\text{Cauchy}(4)$ and $\gamma=\text{Cauchy}(4\sqrt{2})$.
By \cite{CS83}, the triplet $(\nu,\nu,\gamma)$ is JM with centers $C$ and $-C$ for some $C>0$. Take two independent joint mixes
$(X_1,X_2,X_3)$ and $(Y_1,Y_2,Y_3)$ for $(\nu,\nu,\gamma)$ with centers $C$ and $-C$, respectively. Fix $\alpha \in [0,1]$ and define $Z_i=\alpha X_i+(1-\alpha)Y_i, 1 \leq i \leq 3$.
Using characteristic functions, it is straightforward to see that $Z_1\sim Z_2 \sim\nu$ and $Z_3\sim\gamma$. Hence,
$(Z_1,Z_2,Z_3)$ is a joint mix for $(\nu,\nu,\gamma)$ with center $(2\alpha-1)C$. Since $\alpha \in [0,1]$ is arbitrary, Proposition~\ref{bv6r} implies that each point in the interval $[-C/3,C/3]$ is a 3-center of $\mu=(2\nu+\gamma)/3$. Note however that such $\mu$
does not belong to the Cauchy family of distributions.
\end{example}

The general question of whether the center of a JM $n$-tuple of distributions is \emph{always} unique was stated as an open problem in~\cite{WW11,PW15,W15}. During the writing of the present paper, we became aware of the Chen-Shepp example in~\cite{CS83} providing an early negative answer to the question.

However, the Chen-Shepp example, while implying non-uniqueness of the center, does not provide an explicit
construction for it depends on (the existence of) an orthogonal projection. Furthermore, the value of $C$ is not explicitly given and it is not clear if and how it can be computed.
We next give an example of couplings having the same marginal distributions but different sums. To the best of our knowledge, this is the first explicit construction of two joint (complete) mixes having the same marginal distributions and different centers.

\begin{example}[Center of a JM triplet is not unique] \label{ex:01}
Let $Z$ be a random variable with a geometric distribution with parameter $1/2$, that is,
$\p(Z=k)=2^{-(k+1)}$, $k \geq 0$, and let $B$ be a Bernoulli random variable with parameter $1/2$ independent of $Z$.
Let
$$X_1=X_2=2^Z,~X_3=-2^{Z+1},$$
and
$$Y_1=B2^{Z+1}+(1-B),~Y_2=(1-B)2^{Z+1}+B,~Y_3=-2^{Z+1}.$$
Then, $X_1+X_2+X_3=0$, $Y_1+Y_2+Y_3=1$ and $X_3=Y_3$. Furthermore,  $\p(X_1=1)=1/2=\p(Y_1=1)$ and
\begin{gather*}
\p(Y_1=2^k)=\p(B=1)\p(Z=k-1)=2^{-(k+1)}=\p(X_1=2^k)
\end{gather*}
for each $k\ge 1$. Similarly, $X_2\sim Y_2$. Thus, if $\nu$ denotes the distribution of $X_1=X_2$ and $\gamma$ that of $X_3$, the triplet $(\nu,\nu,\gamma)$ is JM with centers 0 and 1. From this example and Proposition~\ref{bv6r}, it also follows that the probability measure $\mu=\frac{2}{3}\nu+\frac{1}{3}\gamma$ is 3-CM with centers 0 and 1/3. In Example \ref{ex:02again} below we shall see that $0$ and $1/3$ are actually the only $3$-centers of $\mu$.
\end{example}
%

\section{The set of centers of mixable distributions}

Let $\Lambda_n(\mu)$ be the set of those $\lambda\in\Gamma(\mu,\ldots,\mu)$ such that
$$
\lambda\big(\bigl\{x\in\mathbb{R}^n:x_1+\ldots+x_n=C\bigr\}\big)=1
$$
for some $C\in\mathbb{R}$. With a slight abuse of terminology, we call $C/n$ the $n$-center of $\lambda$. Clearly, $\Lambda_n(\mu)\neq\emptyset$ if and only if $\mu$ is $n$-CM and each $\lambda\in\Lambda_n(\mu)$ is the distribution of a $n$-complete mix for $\mu$. We also denote by $\phi$ the function
$$\phi(\lambda)=\text{\upshape center}(\lambda),\quad\text{ for }\lambda\in\Lambda_n(\mu).$$

\begin{proposition}\label{th:metric}
Let $\mu$ be $n$-CM. Then, $\Lambda_n(\mu)$ is a compact metric space and the function $\phi:\Lambda_n(\mu)\rightarrow\R$
is continuous. In particular, the set $\phi(\Lambda_n(\mu))$ of $n$-centers of $\mu$ is compact. Thus, there exist $a \le b$ such that $a$ and $b$ are $n$-centers of $\mu$ but no point in $(-\infty,a)\cup (b,+\infty)$ is an $n$-center of $\mu$.
\end{proposition}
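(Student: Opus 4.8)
The plan is to view $\Lambda_n(\mu)$ as a subspace of $\Gamma(\mu,\dots,\mu)$, the set of Borel probability measures on $\R^n$ with all one-dimensional marginals equal to $\mu$, topologized by weak convergence. Since $\R^n$ is Polish, the space of Borel probability measures on $\R^n$ is metrizable and separable (for instance via the L\'evy--Prokhorov metric), so it suffices to work inside this metric space. First I would show that $\Gamma(\mu,\dots,\mu)$ is weakly compact. It is tight: tightness of $\mu$ gives, for each $\varepsilon>0$, a compact $K\subset\R$ with $\mu(K^c)<\varepsilon/n$, and then $\lambda((K^n)^c)\le\sum_{i=1}^n\lambda(\{x:x_i\notin K\})=n\,\mu(K^c)<\varepsilon$ for every $\lambda\in\Gamma(\mu,\dots,\mu)$. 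It is also weakly closed: each coordinate projection $\R^n\to\R$ is continuous, so the $i$-th marginal of $\lambda$ depends weakly continuously on $\lambda$, and the constraint ``$i$-th marginal $=\mu$'' passes to weak limits. By Prokhorov's theorem, $\Gamma(\mu,\dots,\mu)$ is a compact metric space.

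Write $s(x)=x_1+\dots+x_n$ and $H_C=\{x\in\R^n:s(x)=C\}$, so that $\Lambda_n(\mu)=\{\lambda\in\Gamma(\mu,\dots,\mu):s_*\lambda=\delta_C\text{ for some }C\in\R\}$; this set is nonempty precisely because $\mu$ is $n$-CM. The constant $C$ is uniquely determined by $\lambda$ (the hyperplanes $H_C$ are pairwise disjoint), so $\phi$ is well defined by $\phi(\lambda)=C/n$. I would then establish in one stroke that $\Lambda_n(\mu)$ is closed in $\Gamma(\mu,\dots,\mu)$ and that $\phi$ is continuous. Let $\lambda_k\to\lambda$ weakly with $\lambda_k\in\Lambda_n(\mu)$ and $s_*\lambda_k=\delta_{C_k}$. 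Since $s$ is continuous, $\delta_{C_k}=s_*\lambda_k\we s_*\lambda$ in the space of probability measures on $\R$. I claim the sequence $(C_k)$ is bounded: if some subsequence had $C_{k_j}\to+\infty$ (the case $-\infty$ being symmetric), then for the bounded continuous sigmoid $g(x)=1/(1+e^{-x})$ one would get $\int g\,d(s_*\lambda)=\lim_j g(C_{k_j})=1$, which is impossible since $g<1$ everywhere and $s_*\lambda$ is a probability measure. Being bounded, $(C_k)$ has convergent subsequences; any such limit $C$ satisfies $\delta_{C_{k_j}}\we\delta_C$, whence $s_*\lambda=\delta_C$. As $s_*\lambda$ is fixed, all subsequential limits of $(C_k)$ coincide, so $C_k\to C$. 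Consequently $\lambda(H_C)=1$, i.e.\ $\lambda\in\Lambda_n(\mu)$, and $\phi(\lambda_k)=C_k/n\to C/n=\phi(\lambda)$.

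Being a closed subset of the compact metric space $\Gamma(\mu,\dots,\mu)$, the set $\Lambda_n(\mu)$ is itself a compact metric space, and $\phi$ is a continuous map from it into $\R$. Hence $\phi(\Lambda_n(\mu))$ is a nonempty compact subset of $\R$; putting $a=\min\phi(\Lambda_n(\mu))$ and $b=\max\phi(\Lambda_n(\mu))$, both are attained and are therefore $n$-centers of $\mu$, while no point of $(-\infty,a)\cup(b,+\infty)$ belongs to $\phi(\Lambda_n(\mu))$. This is exactly the final assertion.

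None of the steps is deep, but the one place where the absence of a mean for $\mu$ genuinely bites is the well-definedness and continuity of $\phi$: the center cannot be read off as an integral $\int s\,d\lambda$, so one must extract it from the law $s_*\lambda=\delta_C$ and, in the limit, rule out that the centers $C_k$ escape to infinity. That ``no escape'' step — easy though it is, via a bounded continuous test function — is the small technical heart of the argument; the rest is Prokhorov's theorem together with routine manipulations of marginals and continuous images.
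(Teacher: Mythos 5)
Your proof is correct and follows essentially the same route as the paper's: compactness via tightness (inherited from the fixed marginals) plus closedness under weak limits, with continuity of $\phi$ extracted from the weak convergence of the pushforwards under the sum map. The only cosmetic difference is that you rule out escape of the centers to infinity explicitly with a bounded test function, where the paper invokes closedness of the set of Dirac measures in the space of probability measures; both handle the same point.
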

\begin{proof} Let $\mathcal{P}_n$ be the set of all probability measures on $\mathbb{R}^n$, equipped with the topology of weak convergence, and let $S(x)=\sum_{i=1}^nx_i,$ for $x\in\mathbb{R}^n$. Since $\Lambda_n(\mu)\subset\mathcal{P}_n$ and $\mathcal{P}_n$ is a Polish space, $\Lambda_n(\mu)$ is compact if and only if it is closed and tight. Fix $\lambda_k\in\Lambda_n(\mu)$ and $\lambda\in\mathcal{P}_n$ such that $\lambda_k\rightarrow\lambda$ weakly as $k\rightarrow\infty$. Then, $\lambda\in\Gamma(\mu,\ldots,\mu)$ since $\lambda_k\in\Gamma(\mu,\ldots,\mu)$ for all $k$ and the coordinate maps $x\mapsto x_i$ are continuous for all $i=1,\ldots,n$. Similarly, since $S$ is continuous, $\lambda_k\circ S^{-1}\rightarrow\lambda\circ S^{-1}$ weakly. In addition, $\lambda_k\in\Lambda_n(\mu)$ implies $\lambda_k\circ S^{-1}=\delta_{n\phi(\lambda_k)}$ for all $k$. Since $\{\delta_x:x\in\R^n\}$ is a closed subset of $\mathcal{P}_n$, it follows that $\lambda\circ S^{-1}=\delta_x$ for some $x\in\R^n$ and $n\,\phi(\lambda_k)\rightarrow x$ as $k\rightarrow\infty$. Hence, $\lambda\in\Lambda_n(\mu)$ and $\phi(\lambda)=x/n=\lim_k\phi(\lambda_k)$. This proves that $\Lambda_n(\mu)$ is closed and $\phi$ is continuous. Finally, $\Lambda_n(\mu)$ is tight since all its elements have the same one-dimensional marginals (all equal to $\mu$). Thus, $\Lambda_n(\mu)$ is a compact metric space.\end{proof}

If $\mu$ is $n$-CM with an unique center, then $a=b$ in Proposition ~\ref{th:metric}.
In case $a<b$, a natural question is: which points in the interval $[a,b]$ are centers of $\mu$ ?

A probability measure supported by the integers, such as $\mu$ in Example \ref{ex:01}, has at most finitely many centers because of Proposition~\ref{th:metric}.
This conclusion can be actually generalized to any discrete distribution.

\begin{proposition}\label{disc}
If $(\mu_1,\dots,\mu_n)$ is JM and each $\mu_i$ is discrete, then $(\mu_1,\dots,\mu_n)$ has finitely many centers.
\end{proposition}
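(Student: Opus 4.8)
The plan is to bypass the compactness machinery of Proposition~\ref{th:metric} (which only yields that the set of centers is a compact subset of $\R$, and hence could a priori be infinite) and instead to exhibit one \emph{fixed} finite set of reals that contains every center of $(\mu_1,\dots,\mu_n)$. The point is that discreteness lets us capture almost all of the mass of each $\mu_i$ by a finite set, chosen once and for all, independently of any particular joint mix.

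First I would fix, for each $i=1,\dots,n$, a finite set $F_i\subset\R$ with $\mu_i(\R\setminus F_i)<\frac{1}{2n}$; such an $F_i$ exists because $\mu_i$ is a countable sum of point masses. Then set $F=\{y_1+\dots+y_n:y_i\in F_i,\ 1\le i\le n\}$, a finite subset of $\R$ depending only on $\mu_1,\dots,\mu_n$ (and on $n$), not on any coupling.

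Next, given an arbitrary center $C$ and a joint mix $(X_1,\dots,X_n)$ for $(\mu_1,\dots,\mu_n)$ with $X_i\sim\mu_i$ and $\sum_{i=1}^nX_i\as C$, I would estimate $\p\bigl(\bigcup_i\{X_i\notin F_i\}\bigr)\le\sum_i\mu_i(\R\setminus F_i)<\frac12$, so the event $E=\bigcap_i\{X_i\in F_i\}$ satisfies $\p(E)>\frac12>0$. Since $\{\sum_iX_i=C\}$ has probability $1$, it meets $E$, so there is a sample point $\omega$ at which $C=\sum_iX_i(\omega)$ with each $X_i(\omega)\in F_i$, i.e. $C\in F$. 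As $F$ depends neither on $C$ nor on the chosen joint mix, the set of centers of $(\mu_1,\dots,\mu_n)$ is contained in the finite set $F$, which is exactly the claim.

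I do not expect a genuine obstacle here; the only subtlety worth underlining is that $F_1,\dots,F_n$ (and hence $F$) must be selected before any coupling is considered, which is precisely what turns the ``sum is a.s.\ constant'' constraint into membership in a set that is at once finite and coupling-independent. Discreteness is essential: by Example~\ref{ex:0} the set of centers of a general (non-discrete) $n$-tuple can be a nondegenerate interval.
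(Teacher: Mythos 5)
Your proof is correct and is essentially identical to the paper's: both choose finite sets capturing all but a small fraction of each marginal's mass (the paper uses $\mu_i(A_i)>1-1/n$, you use $\mu_i(\R\setminus F_i)<\frac{1}{2n}$), apply a union bound to get a positive-probability event on which every $X_i$ lands in its finite set, and conclude that any center lies in the fixed finite sumset $A_1+\dots+A_n$. The difference in constants is immaterial.
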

\begin{proof}
Since $\mu_i$ is discrete, there is a finite set $A_i$ such that $0\in A_i$ and $\mu_i(A_i)>1-1/n$. Let $(X_1,\dots,X_n)$ be a joint mix for $(\mu_1,\dots,\mu_n)$ with center $C$ and $B=\bigcap_{i=1}^n\{X_i\in A_i\}$. Then,
$$C\id_{B}\as (X_1+\dots+X_n)\id_{B}\in A_1+\dots+A_n,$$
where $A_1+\dots+A_n=\bigl\{\sum_{i=1}^nx_i:x_i\in A_i, 1 \leq i \leq n\bigr\}$. Since $\p(B)>0$, it follows that $C$ belongs to the finite set $A_1+\dots+A_n$.
\end{proof}

The situation is quite different for diffuse distributions, which can have infinitely many centers; see for instance Example~\ref{ex:0}. A more interesting case is  exhibited by Theorem \ref{th:cauchy} below, where $\mu$ is the standard Cauchy and each point in $[a,b]$ is an $n$-center of $\mu$.

We next obtain two useful bounds for $a$ and $b$ in Proposition \ref{th:metric}. Define the quantile functional
$$q_\mu(t)=\inf\{x\in \R: \mu((-\infty,x])\ge t\},~~t\in (0,1).$$
For $ 0 <\alpha < \beta < 1$,  define also the average quantile functional
$$R_{[\alpha,\beta]} (\mu) = \frac{1}{\beta -\alpha } \int _{\alpha} ^{\beta} q_\mu(t)\dif t.$$
Note that, for fixed $\alpha$ and $\beta$, the map $\mu\mapsto R_{[\alpha,\beta]}(\mu)$  is continuous with respect to weak convergence.
\begin{proposition}\label{m9hj}
Suppose that  $(\mu_1,\dots,\mu_n)$ is JM with center $C$. Then, for any $\beta_1 ,\dots, \beta_n \in(0,1) $ such that $\beta: = \beta_1 +\dots+\beta_n <1 $, one obtains
\begin{equation}\label{eq:ncCM}
\sum_{i=1}^n  R_{[\beta_i ,1- \beta +\beta_i ] }(\mu_i) \le  C \le \sum_{i=1}^n  R_{[\beta- \beta_i ,1-\beta_i ] }(\mu_i).
\end{equation}
\end{proposition}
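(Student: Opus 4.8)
The plan is to exploit a rearrangement/coupling argument together with a classical bound on the quantile of a sum. Let $(X_1,\dots,X_n)$ be a joint mix for $(\mu_1,\dots,\mu_n)$ with center $C$, so $X_i\sim\mu_i$ and $\sum_{i=1}^n X_i\as C$. The key observation is that for a single random variable $X_i\sim\mu_i$ and any measurable event $E$ with $\p(E)=1-\beta_i$, the conditional expectation $\E(X_i\mid E)$ is at least $R_{[\beta_i,1-\beta+\beta_i]}(\mu_i)$ only for carefully chosen $E$; more robustly, one has the two-sided inequality that for ANY event $E$ with $\p(E)\ge 1-\beta+\beta_i$ (equivalently $\p(E^c)\le\beta-\beta_i$),
$$
\E\bigl(X_i\,\idc_E\bigr)\ \ge\ \int_{\beta-\beta_i}^{1} q_{\mu_i}(t)\diff t \quad\text{minus a correction,}
$$
and symmetrically an upper bound. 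The cleanest route is to fix the event $B=\bigcup_{i=1}^n\{X_i< q_{\mu_i}(\beta_i)\}\cup\bigcup_{i=1}^n\{X_i> q_{\mu_i}(1-\beta+\beta_i)\}$ — no, rather: I would pick thresholds so that the "bad" sets have controlled probability, then on the good event each $X_i$ lies in a quantile window, and on the good event the sum is still $C$ almost surely.

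Here is the concrete scheme for the lower bound in \eqref{eq:ncCM}. For each $i$, let $E_i=\{q_{\mu_i}(\beta_i)\le X_i\le q_{\mu_i}(1-\beta+\beta_i)\}$; then $\p(E_i^c)\le\beta_i+\beta-(1-\beta+\beta_i)$ — recompute: $\p(X_i< q_{\mu_i}(\beta_i))\le\beta_i$ and $\p(X_i> q_{\mu_i}(1-\beta+\beta_i))\le\beta-\beta_i$, so $\p(E_i^c)\le\beta$. That is too weak to union-bound over $n$ sets. Instead I would use only the lower truncation: let $E_i=\{X_i\ge q_{\mu_i}(\beta_i)\}$, so $\p(E_i^c)\le\beta_i$ and hence $\p\bigl(\bigcap_i E_i\bigr)\ge 1-\sum_i\beta_i=1-\beta>0$. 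On $B:=\bigcap_{i=1}^n E_i$ we have $\sum_i X_i=C$ a.s., and $X_i\idc_B\ge q_{\mu_i}(\beta_i)$ pointwise on $B$ but we need more — we need an integrated lower bound. Taking expectations, $C\,\p(B)=\sum_{i=1}^n\E(X_i\idc_B)$. Now the distribution of $X_i$ restricted to $B$ is dominated (in the appropriate stochastic sense) by the law of $X_i$ restricted to the event $\{X_i\ge q_{\mu_i}(\beta_i)\}\cap(\text{top part})$, because conditioning on the other coordinates being large pushes $X_i$ down; making this rigorous via the Hardy–Littlewood / Fréchet rearrangement inequality gives $\E(X_i\idc_B)\ge (1-\beta)\,R_{[\beta_i,1-\beta+\beta_i]}(\mu_i)$. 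Dividing by $\p(B)\ge 1-\beta$ yields the left inequality.

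I would carry out the steps in this order: (1) reduce to the existence of a joint mix $(X_1,\dots,X_n)$ with $\sum X_i\as C$; (2) for the lower bound, truncate each coordinate from below at its $\beta_i$-quantile, intersect to get an event $B$ of probability at least $1-\beta>0$ on which the sum is still $C$; (3) write $C=\frac{1}{\p(B)}\sum_i\E(X_i\idc_B)$ and bound each term below by the average of the $q_{\mu_i}$ over the window $[\beta_i,1-\beta+\beta_i]$, using that among all events of probability $\p(B)$ the smallest value of $\E(X_i\idc_{\cdot})$ is attained on a lower tail, and that $\E(X_i\idc_B)$ with $\p(B)\ge 1-\beta$ is at least $\int_{\beta_i}^{1-\beta+\beta_i}q_{\mu_i}(t)\diff t$ (this needs $\p(B)\le 1-\beta+\sum\beta_i$... actually $\p(B)\le 1$, and the window has length exactly $1-\beta$, so the bound is $\E(X_i\idc_B)\ge\int_{\beta_i}^{1-\beta+\beta_i}q_{\mu_i}(t)\diff t$ provided we also discard mass above — handled by noting $X_i\idc_B\le X_i\idc_{E_i}$ fails, so instead bound $\E(X_i\idc_B)=\E(X_i)-\E(X_i\idc_{B^c})\ge\ldots$ when the mean exists, and in general pass through finite truncations $\{|X_i|\le m\}$ and let $m\to\infty$, exactly as in the proof of Proposition~\ref{th:th-weak}); (4) for the upper bound in \eqref{eq:ncCM}, apply the lower bound to the joint mix $(-X_1,\dots,-X_n)$, which has center $-C$ and marginals $\widetilde\mu_i(A)=\mu_i(-A)$, and use $q_{\widetilde\mu_i}(t)=-q_{\mu_i}(1-t)$ together with $R_{[\alpha,\beta]}(\widetilde\mu_i)=-R_{[1-\beta,1-\alpha]}(\mu_i)$ to translate the inequality; choosing the roles of $\beta_i$ and $\beta-\beta_i$ appropriately gives $C\le\sum_i R_{[\beta-\beta_i,1-\beta_i]}(\mu_i)$.

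The main obstacle is step (3): making precise the claim that conditioning on the event $B$ (where other coordinates are forced to compensate) can only decrease $\E(X_i\idc_B)$ relative to the "ideal" lower-window expectation $\int_{\beta_i}^{1-\beta+\beta_i}q_{\mu_i}$. The clean way around this is \emph{not} to argue about conditional laws at all, but to observe directly that for any event $D$ and any integrable-enough $X\sim\nu$, one has $\E(X\idc_D)\ge\int_0^{\p(D)}q_\nu(t)\diff t$ and $\E(X\idc_D)\le\int_{1-\p(D)}^{1}q_\nu(t)\diff t$ — a standard consequence of the fact that $\idc_D$ and $X$ are comonotone/countermonotone in the extremal cases (Hardy–Littlewood). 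Applying the first inequality with $D=B^c$ (so $\p(D)\le\beta-\beta_i$ when we also use the upper truncation, or $\le\beta$ if we don't) bounds $\E(X_i\idc_{B^c})$ from above, hence $\E(X_i\idc_B)=c_i-\E(X_i\idc_{B^c})$ from below when the mean $c_i$ exists; the general (no-mean) case is recovered by truncating at level $m$, applying the bound to $X_i\wedge m\vee(-m)$, and letting $m\to\infty$ exactly as in Proposition~\ref{th:th-weak}, using that the window integrals $\int q_{\mu_i}$ over bounded-away-from-$0,1$ intervals are finite and converge. I expect the bookkeeping of which quantile window appears — $[\beta_i,1-\beta+\beta_i]$ versus its mirror — to be the only genuinely fiddly part, and it is pinned down by tracking signs through the $(-X_i)$ substitution.
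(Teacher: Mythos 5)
Your overall architecture --- prove the lower bound in \eqref{eq:ncCM} directly from a joint mix via quantile truncation and Hardy--Littlewood bounds, then obtain the upper bound by reflecting through $x\mapsto -x$ --- is genuinely different from the paper's. The paper does not argue on the level of events at all: it invokes the subadditivity of the Range-Value-at-Risk functional (Theorem~1 of \cite{ELW16}) for \emph{integrable} random variables, approximates the possibly non-integrable joint mix in distribution by integrable vectors, and passes to the limit using the continuity of $\mu\mapsto R_{[\alpha,\beta]}(\mu)$ under weak convergence, finally letting an auxiliary $\epsilon\downarrow 0$. Your reflection step (4) coincides with the paper's reduction of one inequality to the other. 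The problem is that your step (3) amounts to re-proving the \cite{ELW16} inequality from scratch, and as written it has two genuine gaps. First, a normalization gap: with $B=\bigcap_i E_i$ you only know $\p(B)\ge 1-\beta$, and in general $\p(B)>1-\beta$; from $C\,\p(B)=\sum_i\E(X_i\mathbf{1}_B)\ge(1-\beta)\sum_i R_{[\beta_i,1-\beta+\beta_i]}(\mu_i)$ you can only conclude $C\ge\frac{1-\beta}{\p(B)}\sum_i R_i$, which implies $C\ge\sum_i R_i$ only when $\sum_i R_i\le 0$. Second, and more seriously, the key estimate $\E(X_i\mathbf{1}_B)\ge\int_{\beta_i}^{1-\beta+\beta_i}q_{\mu_i}(t)\,\dif t$ is not delivered by the device you propose. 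Writing $\E(X_i\mathbf{1}_B)=\E(X_i)-\E(X_i\mathbf{1}_{B^c})$ and applying Hardy--Littlewood to $B^c$ requires $\E(X_i)$ to exist and, even then, produces the window $[0,\p(B)]$ rather than $[\beta_i,1-\beta+\beta_i]$; for standard Cauchy marginals --- the case this proposition exists to serve --- one has $\int_0^p q_{\mu_i}(t)\,\dif t=-\infty$ for every $p<1$, so the bound is vacuous. The alternative you gesture at, decomposing relative to $E_i=\{X_i\ge q_{\mu_i}(\beta_i)\}$, yields $\E(X_i\mathbf{1}_B)\ge\int_{\beta_i'}^{\beta_i'+\p(B)}q_{\mu_i}(t)\,\dif t$ with $\beta_i'=\p\bigl(X_i<q_{\mu_i}(\beta_i)\bigr)\le\beta_i$, and shifting the window of a nondecreasing integrand to the left only \emph{decreases} the integral, so this does not dominate $\int_{\beta_i}^{1-\beta+\beta_i}q_{\mu_i}(t)\,\dif t$ either. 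Your fallback of truncating at level $m$ also breaks the identity $\sum_i X_i\as C$ on which the whole computation rests.

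On the positive side, one worry you raise is not actually a problem: on $B$ each $X_i\mathbf{1}_B$ is bounded below by $q_{\mu_i}(\beta_i)\wedge 0$, so $\E(X_i\mathbf{1}_B)$ exists in $(-\infty,+\infty]$, and since $\sum_i X_i\mathbf{1}_B=C\mathbf{1}_B$ is integrable, each $\E(X_i\mathbf{1}_B)$ is in fact finite; no separate integrability argument is needed for the lower-bound direction. What is missing is precisely the content of Theorem~1 of \cite{ELW16}, which is a nontrivial result with its own proof and cannot be recovered by the one-line Hardy--Littlewood manipulations you sketch. To repair the proof, either cite that theorem and add the paper's approximation-by-integrable-vectors step (to cover marginals without a mean), or supply a correct self-contained proof of the inequality $\E(X_i\mathbf{1}_B)\ge\int_{\beta_i}^{1-\beta+\beta_i}q_{\mu_i}(t)\,\dif t$ under the constraint $B\subset\bigcap_j E_j$ --- which is exactly the delicate point your sketch leaves open.
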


\begin{proof}
The first inequality follows from the second by noting that $-C$ is a center of $(\mu_1^*,\dots,\mu_n^*)$, where $\mu_i^*(A)=\mu_i(-A)$ for each $A\in\mathcal{B}$. Hence, we only prove the second inequality.

By applying $\alpha_i=\beta_i$, $\beta_i=1-\beta-\epsilon$
in Theorem 1 of \cite{ELW16} (with their notation $\mathrm{RVaR}_{a,b}(X)=R_{[1-a-b,1-a]}(\mu)$ for $X\sim \mu$, $a,b>0$, $a+b<1$), we obtain, for any integrable random variables $Y_1\sim \nu_1,\dots,Y_n\sim\nu_n$, with $\sum_{i=1}^{n}Y_i =Y\sim \nu$, that
$$ R_{[\epsilon,1-\beta]}(\nu)= \mathrm{RVaR}_{\beta,1-\beta-\epsilon}(Y) \le \sum_{i=1}^n \mathrm{RVaR}_{\beta_i,1-\beta-\epsilon}(Y_i) = \sum_{i=1}^n   R_{[\beta-\beta_i+\epsilon,1-\beta_i]}(\nu_i),$$
for all $\epsilon\in (0,1-\beta)$.

 Take a joint mix $(X_1,\ldots,X_n)$ for $(\mu_1,\dots,\mu_n)$ with center $C$ and a sequence $\{(X_{1,k},\ldots,X_{n,k})\}_{k=1}^\infty$ satisfying
\begin{gather*}
X_{i,k}\text{ is integrable for all }i,\,k\text{ and}
\\(X_{1,k},\ldots,X_{n,k})\overset{d}\longrightarrow (X_1,\ldots,X_n)\quad\text{as }k\rightarrow\infty
\end{gather*}
where $\overset{d}\longrightarrow$ stands for convergence in distribution. Then, $\sum_{i=1}^nX_{i,k}\overset{d}\longrightarrow\sum_{i=1}^nX_i\as C$ and  $X_{i,k}\overset{d}\longrightarrow X_i$ for all $i$. By continuity of the average quantile functional with respect to weak convergence, it follows that
\begin{gather*}
C=R_{[\epsilon,1-\beta]} (\delta_C)  \le \sum_{i=1}^n  R_{[\beta - \beta_i +\epsilon,1-\beta_i] }(\mu_i)\quad\text{for any }\epsilon\in(0,1-\beta).
\end{gather*}
Finally, by taking  $\epsilon\downarrow 0$, one obtains
$$C  \le \sum_{i=1}^n  R_{[\beta - \beta_i ,1-\beta_i] }(\mu_i),  $$
which concludes the proof.
\end{proof}

Letting $\beta_1=\ldots=\beta_n$ and $\mu_1=\ldots=\mu_n$, Proposition \ref{m9hj} has the following useful consequence.

\begin{corollary}\label{coro:bounds}
If $\mu$ is $n$-CM with center $c$, then $ a^*\le c\le b^*$, where
$$a^*=\sup_{\alpha \in (0,\frac1n )}   R_{[\alpha ,1-  (n-1) \alpha ] }(\mu)\quad\text{and}\quad b^*=\inf_{\alpha \in  (0,\frac1n  ) }   R_{[(n-1)\alpha ,1-\alpha  ] }(\mu).$$
\end{corollary}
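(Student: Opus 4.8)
The plan is to derive Corollary~\ref{coro:bounds} as the promised specialization of Proposition~\ref{m9hj}, so the whole task is essentially a substitution together with a passage from a fixed choice of the $\beta_i$ to the supremum/infimum over them. First I would let $\mu$ be $n$-CM with center $c$, which means the $n$-tuple $(\mu,\dots,\mu)$ is JM with center $C=nc$. Apply Proposition~\ref{m9hj} with $\mu_1=\dots=\mu_n=\mu$ and a common value $\beta_1=\dots=\beta_n=\alpha$, where $\alpha\in(0,\tfrac1n)$ so that $\beta=n\alpha<1$. Then in the right-hand bound of \eqref{eq:ncCM} each summand becomes $R_{[\beta-\beta_i,\,1-\beta_i]}(\mu)=R_{[(n-1)\alpha,\,1-\alpha]}(\mu)$, and in the left-hand bound each summand becomes $R_{[\beta_i,\,1-\beta+\beta_i]}(\mu)=R_{[\alpha,\,1-(n-1)\alpha]}(\mu)$. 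Hence \eqref{eq:ncCM} collapses to
$$
n\,R_{[\alpha,\,1-(n-1)\alpha]}(\mu)\ \le\ C\ \le\ n\,R_{[(n-1)\alpha,\,1-\alpha]}(\mu).
$$

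Next I would divide through by $n$, using $c=C/n$, to get $R_{[\alpha,\,1-(n-1)\alpha]}(\mu)\le c\le R_{[(n-1)\alpha,\,1-\alpha]}(\mu)$ for \emph{every} admissible $\alpha$. Since this holds for all $\alpha\in(0,\tfrac1n)$ with $c$ fixed, I may take the supremum over $\alpha$ in the lower bound and the infimum over $\alpha$ in the upper bound, obtaining precisely $a^*\le c\le b^*$ with $a^*$ and $b^*$ as defined in the statement. It is worth remarking, as a consistency check on the exponents, that in the symmetric case the two bounds are related by the substitution $\alpha\leftrightarrow\alpha$ together with reflection, which matches the reflection symmetry $-C\in\phi(\Lambda_n(\mu^*))$ already used in the proof of Proposition~\ref{m9hj}.

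There is essentially no hard step here: the only things to be careful about are (i) checking that the interval endpoints stay in $(0,1)$ and in the correct order — for $\alpha\in(0,\tfrac1n)$ one has $0<\alpha<1-(n-1)\alpha<1$ and $0<(n-1)\alpha<1-\alpha<1$, so both average quantile functionals are well defined — and (ii) noting that $R_{[\alpha,\beta]}(\delta_C)=C$ for any $0<\alpha<\beta<1$, which was already invoked in the proof of Proposition~\ref{m9hj} and justifies reading off $c$ directly. If any step deserves a word of caution it is merely making sure the direction of the inequalities is preserved when passing to $\sup$ and $\inf$; everything else is immediate from Proposition~\ref{m9hj}.
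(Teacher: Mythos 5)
Your proof is correct and follows exactly the route the paper intends: the paper presents Corollary~\ref{coro:bounds} as the immediate specialization of Proposition~\ref{m9hj} obtained by setting $\mu_1=\dots=\mu_n=\mu$ and $\beta_1=\dots=\beta_n=\alpha$, which is precisely your substitution, followed by dividing by $n$ and passing to the supremum/infimum over $\alpha\in(0,\tfrac1n)$. Nothing further is needed.
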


\begin{example}[The mean inequality]\label{ar5cv7}
A remarkable consequence of Corollary \ref{coro:bounds} is the mean inequality (Proposition 2.1(7) of \cite{WW11}), arguably the most important necessary condition for complete mixability, which is also sufficient for probability measures with monotone densities.
Let $x=\lim_{\epsilon\downarrow 0}q_\mu(\epsilon)$ and $y=\lim_{\epsilon\downarrow 0}q_{\mu}(1-\epsilon)$ be the left and right end-points of $\mu$.
Assume $x$ and $y$ are finite and denote by $c$ the mean of $\mu$. If $\mu$ is $n$-CM, Corollary \ref{coro:bounds} yields $c\le b^*$. Hence, for $\alpha \in (0,1/n)$, we have that
$
c \leq R_{[(n-1)\alpha ,1-\alpha ]}(\mu),
$
that is
\begin{equation}\label{quick}
\frac{c- (1-n\alpha) R_{[(n-1)\alpha ,1-\alpha ]}(\mu)}{\alpha} \le n\,c.
\end{equation}
On the other hand,
\begin{gather*}
\lim_{\alpha \downarrow 0}\frac{1}{\alpha}\left(c-(1-n\alpha) R_{[(n-1)\alpha ,1-\alpha ]}(\mu)\right)=\lim_{\alpha \downarrow 0} \frac{1}{\alpha }\left(\int_0^1 q_\mu(t)\dif t - \int^{1-\alpha}_{(n-1)\alpha} q_\mu(t)\dif t\right)
\\=\lim_{\alpha \downarrow 0} \frac{1}{\alpha }\left(\int_{1-\alpha}^1 q_\mu(t)\dif t+\int_0^{(n-1)\alpha} q_\mu(t)\dif t\right) = y+(n-1)x.
\end{gather*}
Therefore, inequality \eqref{quick} yields
$ {y+(n-1)x} \le nc$, one side of the mean inequality in \cite{WW11} (the other follows similarly).
\end{example}

\begin{example}[Example \ref{ex:01} revisited]\label{ex:02again}
The probability measure $\mu$ defined  in Example \ref{ex:01} is 3-CM with 3-centers 0 and $1/3$. We now prove that $0$ and $1/3$ are actually the only 3-centers of $\mu$. For $n=3$,
one can compute that
$$a^*\ge \lim_{\alpha\downarrow 0}   R_{[\alpha ,1-  2 \alpha ] }(\mu)  =0 ~~\mbox{and}~~b^*\le \lim_{\alpha\downarrow 0}    R_{[2\alpha ,1-\alpha  ] }(\mu)  =2/3.$$
By Corollary \ref{coro:bounds}, it follows that $a=0$ and $b\le 2/3$.
Let $(X_1,X_2,X_3)$ be a complete mix for $\mu$. Since $\mu(\mathbb{Z})=1$, $a=0$ and $b\le 2/3$, then $X_1+X_2+X_3\in [0,2]\cap\mathbb{Z}$ a.s. Thus, to see that 0 and 1/3 are the only $3$-centers of $\mu$, it suffices to show that $\p(X_1+X_2+X_3=2)<1$.
Since $X_2+X_3 \neq 1$ a.s., then $\p(X_1+X_2+X_3= 2)\le \p(X_1\ne 1)<1$.
\end{example}

\bigskip

Another consequence of Proposition \ref{m9hj} is that a distribution with an infinite mean cannot be $n$-CM for any $n\in \N$.
The following corollary can be shown by letting $\beta_i\downarrow 0$, $i=1,\dots,n$ in \eqref{eq:ncCM}, so that the left-hand side of \eqref{eq:ncCM} goes to infinity.
\begin{corollary}\label{coro:infinite}
If $\mu_1,\dots,\mu_n$ have means $m_1,\dots,m_n\in (-\infty,\infty]$, respectively, and $m_i=\infty$ for at least one $i=1,\dots,n$,
then $(\mu_1,\dots,\mu_n)$ is not JM. In particular, a probability measure on $\mathbb{R}$ with an infinite mean is not $n$-CM for any $n\in \N$.
\end{corollary}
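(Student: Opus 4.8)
The plan is to argue by contradiction, exploiting the lower bound in Proposition~\ref{m9hj} with all of the free parameters $\beta_i$ sent to $0$ simultaneously. Suppose $(\mu_1,\dots,\mu_n)$ is JM with some center $C\in\R$, and let $i_0$ be an index with $m_{i_0}=\infty$. Since each $m_j$ lies in $(-\infty,\infty]$, the mean exists in the extended sense, which means the negative part $\int x^-\,\mu_j(\dif x)=\int_0^1\max\{-q_{\mu_j}(t),0\}\,\dif t$ is \emph{finite} for every $j$, while $m_j=\int_0^1 q_{\mu_j}(t)\,\dif t$. Applying \eqref{eq:ncCM} with $\beta_1=\dots=\beta_n=t$ (so $\beta=nt<1$), I get, for every $t\in(0,1/n)$,
$$\sum_{i=1}^n R_{[t,\,1-(n-1)t]}(\mu_i)\ \le\ C .$$
It then suffices to show that the left-hand side diverges to $+\infty$ as $t\downarrow 0$.

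The main computation I would carry out is the behaviour of $R_{[t,1-(n-1)t]}(\mu_i)=\frac{1}{1-nt}\int_t^{1-(n-1)t}q_{\mu_i}(s)\,\dif s$ as $t\downarrow0$. Split $q_{\mu_i}$ into positive and negative parts. As $t\downarrow0$ the intervals $[t,1-(n-1)t]$ increase monotonically to $(0,1)$, so $\mathbf{1}_{[t,1-(n-1)t]}\uparrow\mathbf{1}_{(0,1)}$ pointwise; monotone convergence gives $\int_t^{1-(n-1)t}\max\{q_{\mu_i}(s),0\}\,\dif s\uparrow\int x^+\,\mu_i(\dif x)$, and dominated convergence (the negative part being dominated by the integrable function $s\mapsto\max\{-q_{\mu_i}(s),0\}$ on $(0,1)$) gives $\int_t^{1-(n-1)t}\max\{-q_{\mu_i}(s),0\}\,\dif s\to\int x^-\,\mu_i(\dif x)$. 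Since also $\tfrac{1}{1-nt}\to1$ with $\tfrac1{1-nt}\ge1$, this yields $R_{[t,1-(n-1)t]}(\mu_i)\to m_i$ for each $i$; in particular, for $i=i_0$ one has $R_{[t,1-(n-1)t]}(\mu_{i_0})\to+\infty$, whereas for every $i\ne i_0$ the limit $m_i$ lies in $(-\infty,\infty]$ and hence the corresponding term is bounded below for small $t$ (if one wants to avoid invoking the exact limit, just bound the $i\ne i_0$ terms below by $-\tfrac{1}{1-nt}\int x^-\mu_i(\dif x)\ge -2\int x^-\mu_i(\dif x)$ and note the $i_0$ term exceeds $\int_t^{1-(n-1)t}q_{\mu_{i_0}}(s)\,\dif s\to+\infty$). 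Summing over $i$, the left-hand side above tends to $+\infty$, contradicting the bound by $C$. Hence $(\mu_1,\dots,\mu_n)$ is not JM.

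For the final assertion, let $\mu$ have an infinite mean, i.e.\ exactly one of $\int x^+\,\mu(\dif x)$, $\int x^-\,\mu(\dif x)$ is infinite. If $\int x^+\,\mu(\dif x)=\infty$ then $\mu$ has mean $+\infty$, and taking $\mu_1=\dots=\mu_n=\mu$ in the above shows $\mu$ is not $n$-CM. If instead $\int x^-\,\mu(\dif x)=\infty$, apply this to the reflected measure $\mu^*$, $\mu^*(A)=\mu(-A)$, which has mean $+\infty$ and is $n$-CM iff $\mu$ is (if $X_1+\dots+X_n$ is a.s.\ constant, so is $-X_1-\dots-X_n$), again forcing $\mu$ not $n$-CM. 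I expect the only genuine work here to be the convergence analysis of the average-quantile functional in the second paragraph, and that is a routine monotone/dominated convergence argument once the identity $\int_0^1 q_{\mu_i}(t)\,\dif t=\int x\,\mu_i(\dif x)$ in $(-\infty,\infty]$ is in hand.
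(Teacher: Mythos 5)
Your proposal is correct and follows exactly the route the paper indicates: apply the lower bound of Proposition~\ref{m9hj} with all $\beta_i\downarrow 0$ and observe that the left-hand side of \eqref{eq:ncCM} diverges to $+\infty$ (the paper leaves this as a one-sentence remark, while you supply the monotone/dominated convergence details showing $R_{[t,1-(n-1)t]}(\mu_i)\to m_i$, and the reflection argument for a mean equal to $-\infty$). No gaps.
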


 \bigskip
We conclude this section by characterizing the set of $n$-centers of a probability measure based on a duality argument.

Let $\mu$ be a probability measure on $\R$. Recall that $S(x)=\sum_{i=1}^nx_i$ for $x\in \RR^n$ and write $\{S=nc\}$ to denote the set $\{x \in \mathbb{R}^n: S(x)=nc\}$. By definition, a real number $c$ is an $n$-center of $\mu$ if and only if $M(c)=1$, where
\begin{equation*}
M(c)=\sup\bigl\{ \lambda(S=nc): \lambda\in \Gamma(\mu,\dots,\mu)\bigr\}.
\end{equation*}
Based on Theorem 5 of~\cite{lR81} and Remark~2 in~\cite{GR81}, $M(c)$ has the dual representation
\begin{equation}
M(c)=n \inf \left\{ \int g \;d\mu : g \in \mathcal{D}(c)\right\},\label{eq:dual1}
\end{equation}
where $\mathcal{D}(c)$ denotes the class of bounded, Borel-measurable functions
$g:\RR\to\RR$ such that
$
\sum_{i=1}^{n}g(x_i)\geq \mathbf{1}_{\{S=nc\}}(x)
$
for all $x \in \RR^n$. The value of \eqref{eq:dual1} is not easy to compute in general.
However, restricting to a subset of $\mathcal D(c)$ (as done for instance in~\cite{EP06b}) leads to an upper bound for $M(c)$.

We consider the following class of piecewise-linear functions defined,
for $t<c$, as
\begin{equation*}
g_t(x)=\begin{cases}
0 &\text{if $x<t$,}\\
\frac{x-t}{n(c-t)}&\text{if $t \leq x \leq nc-(n-1)t$,}\\
1 &\text{otherwise.}
\end{cases}
\end{equation*}
Since $g_t \in \mathcal D(c)$ for all $t<c$, we obtain
\begin{equation*}
M(c) \leq D(c):=\inf_{t<c} \left\{ n \int g_t \;d\mu \right\}= \inf_{t < c} \left\{ \frac{\int_{t}^{nc-(n-1)t} \F(x) \; dx
}{c-t}\right\}
\end{equation*}
where $\F(x)=\mu((x,\infty))$. If $D(c) <1$, then $c$ is not an $n$-center of $\mu$.
Therefore, for $c$ to be an $n$-center of $\mu$, it is necessary that
$$ \frac{\int_{t}^{nc-(n-1)t} \F(x) \; dx
}{c-t}\ge 1\quad\text{for all $t<c$}.$$
The above inequality is another necessary condition for the center of $n$-CM probability measures, in addition to that of Corollary \ref{coro:bounds}. These two necessary conditions are not equivalent in general.


\section{The Cauchy distribution}

From now on, we let $\mu=\text{Cauchy}(1),$ the standard Cauchy distribution.
It is shown in \cite{RU02b} that $\mu$ is $n$-CM with center 0, for each $n\ge 2$, as it is symmetric and unimodal.
In this section, we characterize the set of $n$-centers of $\mu$.
We start by observing that such set is a closed interval contained in $[-\frac{\log (n-1)}{\pi},\frac{\log (n-1)}{\pi}] $.

\begin{example}\label{ex:cauchy}
As in Proposition ~\ref{th:metric}, let $a$ and $b$ be the minimum and the maximum of the set of $n$-centers of $\mu$. Let $X=(X_1,\dots,X_n)$ and $Y=(Y_1,\dots,Y_n)$ be two independent complete mixes for $\mu$ such that
$$
\sum_{i=1}^{n}X_i \as na \quad \text{ and } \quad \sum_{i=1}^{n}Y_i \as nb.
$$
Fix $\alpha \in [0,1]$ and define $Z_i=\alpha X_i+(1-\alpha)Y_i$ for $1 \leq i \leq n$.
Then, $Z_i\sim\mu$ for each $i$ and $\sum_{i=1}^n Z_i=n(\alpha a+(1-\alpha)b)$ so that $\alpha a+(1-\alpha)b$ is a center of $\mu$. Hence, $\phi(\Lambda_n(\mu))=[a,b]$, namely, each point in $[a,b]$ is a center of $\mu$.

Next, on noting that $q_\mu(t)=\tan(\pi(t-1/2))$, one obtains
\begin{align*}
  R_{[(n-1)\alpha ,1-\alpha  ] }(\mu) &=   \frac{1}{1-n\alpha}\int_{(n-1)\alpha}^{1-\alpha} \tan(\pi(t-1/2))\dif t\\
  &=   \frac{1}{1-n\alpha}\frac{1}{\pi}\log\left(\frac{\sin(\pi(n-1)\alpha)}{\sin(\pi\alpha)}\right)\quad\text{for }\alpha \in (0,1/n).
  \end{align*}
By Corollary \ref{coro:bounds},
  $$b\le b^*\le \lim_{\alpha\downarrow 0}    R_{[(n-1)\alpha ,1-\alpha  ] }(\mu)= \frac{\log (n-1)}{\pi}.$$
Since $a=-b$ (for $\mu$ is symmetric) one also obtains $a \ge -\frac{\log (n-1)}{\pi}$.
\end{example}
\bigskip
Example \ref{ex:cauchy} says that $\phi(\Lambda_n(\mu))\subset[-\frac{\log (n-1)}{\pi},\frac{\log (n-1)}{\pi}]$.
Our main result is that this inclusion is an equality.
\bigskip
\begin{theorem}\label{th:cauchy}
For every $n\ge 2$, the set of $n$-centers of the standard Cauchy distribution is the interval
$$\left[-\frac{\log (n-1)}{\pi},\frac{\log (n-1)}{\pi}\right].$$
\end{theorem}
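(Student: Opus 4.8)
The plan is to prove the reverse inclusion in Theorem~\ref{th:cauchy}: since Example~\ref{ex:cauchy} already shows that every $n$-center of $\mu$ lies in $[-\frac{\log(n-1)}{\pi},\frac{\log(n-1)}{\pi}]$ and that the set of centers is a closed interval symmetric about $0$, it suffices to exhibit, for each $n\ge 2$, an explicit $n$-complete mix $(X_1,\dots,X_n)$ of standard Cauchy variables whose sum equals the constant $C_n:=\frac{n\log(n-1)}{\pi}$ almost surely (so that its $n$-center is $b=\frac{\log(n-1)}{\pi}$). By symmetry and the convexity argument of Example~\ref{ex:cauchy}, producing this single extremal mix closes the gap.

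The natural way to build such a mix is to look for a coupling carried by an explicit one-parameter curve. A standard fact about the Cauchy law is its behaviour under Möbius transformations: if $U$ is uniform on $[0,1)$ (or on $[0,1]$), then $\tan(\pi(U-1/2))$ is standard Cauchy, and more generally $\tan(\pi U + \theta)$ is standard Cauchy for any fixed phase $\theta$ (this is just rotation invariance of the uniform measure on the circle). So I would try $X_i = \tan\bigl(\pi U + \theta_i\bigr)$ for suitable phases $\theta_1,\dots,\theta_n$, or a slightly more general family $X_i = \tan(\pi(a_i U + b_i))$ chosen so that each $X_i$ is individually standard Cauchy while $\sum_i X_i$ collapses to a constant. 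The key algebraic input is a trigonometric identity expressing $\sum_{i} \tan(\pi U + \theta_i)$, with $\theta_i$ the $n$-th roots-type phases, as a function that is constant in $U$; this is reminiscent of the partial-fraction / cotangent-sum identities (e.g. $\sum_{k=0}^{n-1}\cot(z+k\pi/n) = n\cot(nz)$) and their tangent analogues. One then has to verify that the specific constant value produced equals $C_n = \frac{n\log(n-1)}{\pi}$ — which is where the $\log(n-1)$ must emerge, presumably from integrating a derivative of such a sum, or from evaluating a residue/limit rather than a naive tangent sum. It is conceivable the construction is not a single pushforward of one uniform variable but rather an iterated or "mixture" construction: e.g. splitting $n = k + (n-k)$ and combining a $k$-center with an $(n-k)$-center via Example~\ref{bh9ik7} and the additivity of the quantile-type bounds, then optimizing over $k$. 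I would check whether $\log(n-1) = \sum$ of something telescoping in $n$, since $b_n - b_{n-1}$ has a clean form.

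Concretely, the steps I would carry out, in order, are: (1) Reduce via Example~\ref{ex:cauchy} to constructing one complete mix with center exactly $\frac{\log(n-1)}{\pi}$. (2) Recall and set up the Cauchy/Möbius machinery: standard Cauchy $=\tan(\pi(U-1/2))$ for $U\sim\unif$, together with the characteristic-function identity $\E e^{it\,\text{Cauchy}} = e^{-|t|}$, which will let me verify marginals of any candidate by computing characteristic functions. (3) Guess the extremal coupling. The most promising guess is a "linear-in-$U$ phase" family on the circle: take $U\sim\unif$ and $X_i$ built from arctangents so that, on the nose, $X_1+\dots+X_n$ telescopes. If a pure pushforward fails, fall back on a convolution/mixture construction exploiting Proposition~\ref{bv6r} and Example~\ref{bh9ik7}, iterating on $n$. (4) Verify the two required properties of the candidate: each $X_i$ is standard Cauchy (via characteristic functions or a change of variables in the density), and $\sum_i X_i$ is a.s. the claimed constant (via the trig identity). (5) Identify the constant as $\frac{n\log(n-1)}{\pi}$; this likely requires an integral evaluation of the type $\frac{1}{\pi}\int_0^1 \bigl(\text{something}\bigr)\,du$ or a limit of the $R_{[\cdot,\cdot]}$ functional, matching the computation already done in Example~\ref{ex:cauchy} so that the bound is attained with equality.

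The main obstacle I anticipate is step (3)–(5): finding the exact extremal coupling and proving its sum is constant. The quantile bound $b\le\frac{\log(n-1)}{\pi}$ comes from a limiting RVaR inequality, and matching it requires a coupling that is "optimal" in a rearrangement sense — such optimal couplings for heavy-tailed (non-integrable) distributions are subtle precisely because the usual comonotone/countermonotone heuristics and the law of large numbers break down (this is exactly why Propositions~\ref{th:th1-2} and~\ref{th:th-weak} need finite-mean hypotheses). I expect the real work is a clever explicit construction — most likely built from the $\tan$ parametrization and a cotangent-summation identity, possibly combined with an inductive splitting $n\mapsto n-1$ using Example~\ref{bh9ik7} — followed by a careful characteristic-function check that the pieces have exactly Cauchy marginals despite being deterministic functions of few underlying uniforms. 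Getting the bookkeeping of the phases/shifts right so that the marginals are \emph{standard} Cauchy (scale exactly $1$, not $\sqrt2$ as in the Chen–Shepp example) and the sum is \emph{exactly} $\frac{n\log(n-1)}{\pi}$ will be the delicate part.
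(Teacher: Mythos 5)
Your step (1) is a correct and genuine reduction: by Example~\ref{ex:cauchy} the set of $n$-centers is a symmetric closed interval $[-b,b]$ with $b\le\frac{\log(n-1)}{\pi}$, so it suffices to realize the single value $c=\frac{\log(n-1)}{\pi}$ as a center. But the entire mathematical content of the theorem lies in your steps (3)--(5), and there the proposal has a genuine gap: you do not produce a construction, and the primary route you suggest provably cannot work. A sum $\sum_{i=1}^n\tan(\pi U+\theta_i)$ of phase-shifted tangents of a single uniform is never constant: as a meromorphic function of $z$, each term $\tan(z+\theta_i)$ has simple poles of the form $-1/(z-z_0)$ with the \emph{same} sign of the residue, so the poles of the sum cannot cancel, and a nonconstant meromorphic function cannot equal a constant. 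The cotangent-summation identities you invoke yield $n\tan(nz)$-type expressions, i.e.\ another Cauchy variable, not a constant. Your fallback (``iterate Example~\ref{bh9ik7} and split $n=k+(n-k)$'') cannot succeed either as stated, because Example~\ref{bh9ik7} only mixes \emph{different} marginals $\nu$ and $\gamma$; it gives complete mixability of the average $\frac{k\nu+(n-k)\gamma}{n}$, and no finite iteration of this produces the Cauchy marginal itself.

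What is actually needed, and what the paper does, is a \emph{continuum} mixture decomposition: one writes $\mu=\int_0^\infty\mu_t\,Q(\dif t)$, where each $\mu_t$ is an explicit combination of the two atoms $\delta_{c-t}$, $\delta_{c+(n-1)t}$ and a uniform density on $[c-t,K_3(t)]$, engineered so that every $\mu_t$ has mean exactly $c$ and is $n$-CM with center $c$ (via Example~\ref{bh9ik7} for the two-atom part and Lemma~\ref{us45b} for the atom-plus-decreasing-density part); one then needs the nontrivial fact that $\mathcal M_n(c)$ is closed under arbitrary mixtures (Lemma~\ref{ucciard77}). Making the mean-$c$ constraint hold for all $t$ simultaneously while the layers integrate back to the Cauchy density forces the introduction of a cutoff function $h(t)$ solving $\int_{c-t}^{c+(n-1)t}(x-c)\{f(x)-h(t)\}_+\dif x=0$, and verifying $0\le h\le f(c+t)$ and $h'\le0$ is where the specific analytic structure of the Cauchy density enters, through the convexity of $\sqrt{1/f}$; this is also exactly where the threshold $\frac{\log(n-1)}{\pi}$ appears, via $\lim_{t\to\infty}\int_{c-t}^{c+(n-1)t}xf(x)\dif x=\frac{\log(n-1)}{\pi}$. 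None of this machinery is anticipated in the proposal, so as written it establishes only the upper bound already contained in Example~\ref{ex:cauchy} and leaves the attainability direction unproved.
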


\vspace{0.2cm}

The rest of this section is devoted to the proof of Theorem \ref{th:cauchy}.

For each $c\in\R$, let $\mathcal M_n(c)$ denote the collection of $n$-CM probability measures with center $c$. We first need two lemmas of possible independent interest. The first states that $\mathcal M_n(c)$ is closed under arbitrary mixtures, generalizing Theorem 3.2 of \cite{PWW12}.

\begin{lemma}\label{ucciard77}
Let $(T,\mathcal{E},Q)$ be any probability space and, for each $t\in T$, let $\nu_t\in\mathcal M_n(c)$. Suppose that $t\mapsto\nu_t(B)$ is a $\mathcal{E}$-measurable map, for each $B\in\mathcal{B}$, and define
\begin{gather*}
\nu(B)=\int\nu_t(B)\,Q(\dif t).
\end{gather*}
Then, $\nu\in\mathcal M_n(c)$.
\end{lemma}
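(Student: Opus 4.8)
The plan is to build an explicit $n$-complete mix for $\nu$ with center $c$ by ``randomizing over $t$'': first realize, for each $t$, an $n$-complete mix $(X_1^t,\dots,X_n^t)$ for $\nu_t$ with $\sum_i X_i^t = nc$ a.s., then pick $t$ at random according to $Q$ and output the corresponding mix. The only subtlety is measurability — one has to choose the coupling $\lambda_t \in \Lambda_n(\nu_t)$ in a way that depends measurably on $t$, so that the mixed measure $\lambda(B) = \int \lambda_t(B)\,Q(\dif t)$ is well-defined, lies in $\Gamma(\nu,\dots,\nu)$, and is concentrated on $\{S = nc\}$.

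Concretely, first I would note that $\Lambda_n(\nu_t) \cap \{\lambda : \phi(\lambda) = c\}$ is, by Proposition~\ref{th:metric}, a nonempty compact subset of the Polish space $\mathcal P_n$ for every $t$. I would then invoke a measurable selection theorem (Kuratowski--Ryll-Nardzejm, or the Jankov--von Neumann selection theorem) to obtain a map $t \mapsto \lambda_t$ with $\lambda_t \in \Gamma(\nu_t,\dots,\nu_t)$, $\lambda_t(S = nc) = 1$, and $t \mapsto \lambda_t(B)$ $\mathcal E$-measurable for each Borel $B \subset \mathbb R^n$. For the selection argument to apply one needs the set-valued map $t \rightrightarrows \Lambda_n(\nu_t) \cap \phi^{-1}(c)$ to have measurable graph (or to be weakly measurable); this follows from the fact that $t \mapsto \nu_t(A)$ is $\mathcal E$-measurable for each $A \in \mathcal B$, since membership of $\lambda$ in $\Gamma(\nu_t,\dots,\nu_t)$ and in $\{\lambda(S=nc)=1\}$ is described by countably many such measurability conditions evaluated against a countable convergence-determining family.

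Having the measurable selection, define $\lambda(B) = \int \lambda_t(B)\,Q(\dif t)$ for $B$ Borel in $\mathbb R^n$; this is a probability measure by the usual monotone-class argument. Its $i$-th one-dimensional marginal sends $A \in \mathcal B$ to $\int \lambda_t(\{x : x_i \in A\})\,Q(\dif t) = \int \nu_t(A)\,Q(\dif t) = \nu(A)$, so $\lambda \in \Gamma(\nu,\dots,\nu)$. Moreover $\lambda(\{S = nc\}) = \int \lambda_t(\{S=nc\})\,Q(\dif t) = \int 1\,Q(\dif t) = 1$. Hence $\lambda \in \Lambda_n(\nu)$ with center $c$, i.e. $\nu \in \mathcal M_n(c)$.

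The main obstacle is precisely the measurable-selection step: one must be careful that ``$t \mapsto \nu_t(B)$ measurable for all $B \in \mathcal B$'' is enough to make the relevant subset of $\mathcal P_n$ vary measurably in $t$ in the sense required by the selection theorem. An alternative, perhaps cleaner, route that sidesteps heavy selection machinery is the random-variable construction: on a suitable product probability space take a $T$-valued random element $\tau \sim Q$ and, conditionally on $\tau = t$, an $n$-tuple $(X_1,\dots,X_n)$ with law $\lambda_t$; here one still needs $t \mapsto \lambda_t$ measurable to form the conditional law, so the measurability issue reappears, but it can be handled by working with regular conditional distributions and the disintegration theorem on the Polish space $T \times \mathbb R^n$. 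Either way, once measurability is secured the rest is the routine marginal and support computation above, and this is the part I would not belabor.
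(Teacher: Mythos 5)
Your strategy is viable but genuinely different from the paper's, and the step you yourself flag as the main obstacle --- producing a measurable family $t\mapsto\lambda_t$ --- is not actually closed by the justification you offer. Showing that the graph of the multifunction $t\rightrightarrows F(t):=\Gamma(\nu_t,\dots,\nu_t)\cap\{\lambda:\lambda(H)=1\}$, where $H=\{x\in\R^n:x_1+\dots+x_n=nc\}$, is $\mathcal{E}\otimes\mathcal{B}(\mathcal{P}_n)$-measurable does not license Kuratowski--Ryll-Nardzewski, which needs \emph{weak measurability} ($\{t:F(t)\cap U\neq\emptyset\}\in\mathcal{E}$ for every open $U$); and the graph-based theorems (von Neumann/Aumann, Jankov--von Neumann) rest on projection/analyticity arguments and only yield a selection measurable for the $Q$-completion of $\mathcal{E}$, a point that matters because $(T,\mathcal{E},Q)$ is an arbitrary probability space. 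Both defects are repairable --- a $Q$-completion-measurable selection is enough to define $\int\lambda_t(B)\,Q(\dif t)$, or one can obtain weak measurability by observing that $\rho\mapsto\Gamma(\rho,\dots,\rho)\cap\{\lambda:\lambda(H)=1\}$ is upper semicontinuous with compact values and composing with the Borel map $t\mapsto\nu_t$ --- but as written the selection step is asserted rather than proved, and it is the whole content of the lemma.

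For comparison, the paper sidesteps selection entirely. It chooses for each $t$ an \emph{arbitrary} (possibly non-measurable) $\lambda_t\in\Gamma(\nu_t,\dots,\nu_t)$ with $\lambda_t(H)=1$, extends $Q$ to a finitely additive probability $Q^*$ on the power set of $T$, and sets $\lambda^*(B)=\int\lambda_t(B)\,Q^*(\dif t)$ on the field generated by measurable rectangles. A result of Ramachandran (the cited Theorem 6) then upgrades $\lambda^*$ to a $\sigma$-additive measure, because its one-dimensional marginals equal $\nu$ and are $\sigma$-additive --- and this is exactly where the hypothesis that $t\mapsto\nu_t(A)$ is $\mathcal{E}$-measurable enters. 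Finally $\lambda(H)=1$ follows by writing $H^c$ as a countable union of open rectangles, each of $\lambda_t$-measure zero. The finitely additive detour buys exactly what your approach has to fight for: it makes the lemma work over an arbitrary $(T,\mathcal{E},Q)$ with no selection machinery, at the cost of importing one nontrivial result on finitely additive measures. If you complete the measurability argument along one of the lines above, your proof is a legitimate alternative; at present it has a real gap.
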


\begin{proof}
Let $\mathcal{R}$ be the field on $\mathbb{R}^n$ generated by the  measurable rectangles $B_1\times\ldots\times B_n$, where $B_i\in\mathcal{B}$ for all $i$, and let $\gamma:\mathcal{R}\rightarrow\R$ be any map. By Theorem 6 of \cite{dR96}, $\gamma$ is a $\sigma$-additive probability on $\mathcal{R}$ provided it is a {\it finitely additive} probability and $A\mapsto\gamma\bigl\{x\in\mathbb{R}^n:x_i\in A\bigr\}$ is a $\sigma$-additive probability on $\mathcal{B}$ for all $i$.

Let $H=\{x\in\mathbb{R}^n:x_1+\ldots+x_n=nc\}$. For each $t\in T$, since $\nu_t\in\mathcal M_n(c)$, there is $\lambda_t\in\Gamma(\nu_t,\ldots,\nu_t)$ such that $\lambda_t(H)=1$. Define
\begin{gather*}
\lambda^*(B)=\int\lambda_t(B)\,Q^*(dt)\quad\text{for each }B\in\mathcal{R},
\end{gather*}
where $Q^*$ is a finitely additive extension of $Q$ to the power set of $T$. Then, $\lambda^*$ is a finitely additive probability on $\mathcal{R}$ and
\begin{gather*}
\lambda^*\bigl\{x\in\mathbb{R}^n:x_i\in A\bigr\}
=\int\lambda_t\bigl\{x\in\mathbb{R}^n:x_i\in A\bigr\}\,Q^*(dt)\\=\int\nu_t(A)\,Q^*(dt)
=\int\nu_t(A)\,Q(dt)=\nu(A)
\end{gather*}
for all $A\in\mathcal{B}$ and all $i=1,\ldots,n$, where the third equality holds because $t\mapsto\nu_t(A)$ is $\mathcal{E}$-measurable. Hence, $\lambda^*$ is $\sigma$-additive on $\mathcal{R}$. Let $\lambda$ be the only $\sigma$-additive extension of $\lambda^*$ to the Borel $\sigma$-field of $\mathbb{R}^n$. Since $\lambda\in\Gamma(\nu,\ldots,\nu)$, to conclude the proof it suffices to see that $\lambda(H)=1$. In fact, since $H^c$ is open, it is a countable union of open rectangles, that is, $H^c=\cup_kG_k$ with $G_k\in\mathcal{R}$ for all $k$. Since $\lambda_t(G_k)\le\lambda_t(H^c)=0$ for all $t\in T$, one obtains
\begin{gather*}
\lambda(G_k)=\lambda^*(G_k)=\int\lambda_t(G_k)\,Q^*(dt)=0\quad\text{for all }k.
\end{gather*}
Therefore $\lambda(H)=1$, for $H^c$ is a countable union of $\lambda$-null sets.
\end{proof}

The second lemma, which slightly generalizes Theorem 2.4 of \cite{WW11}, provides conditions for a certain probability measure to be $n$-CM.

\begin{lemma}\label{us45b}
Let $\alpha\in [0,1]$, $x<y$, and $\nu$ a probability measure on $[x,y]$ which admits a non-increasing density (with respect to the Lebesgue measure). Then, $\alpha\delta_x+(1-\alpha)\nu$ is $n$-CM if and only if
\begin{gather}\label{ns5r}
\alpha\le 1-\frac{y-x}{n(q-x)}\quad\quad\text{where }\,q=\int z\,\nu(\dif z)\,\text{ is the mean of }\nu.
\end{gather}
\end{lemma}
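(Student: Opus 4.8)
The plan is to prove the two implications separately, exploiting the structure $\alpha\delta_x+(1-\alpha)\nu$ and reducing to the monotone-density case already understood in \cite{WW11}. First I would normalize: by an affine change of variables we may assume $x=0$, so $\nu$ is supported on $[0,y]$ with a non-increasing density and mean $q\in(0,y]$, and condition \eqref{ns5r} becomes $\alpha\le 1-\frac{y}{nq}$. I would also dispose of the boundary cases $\alpha=0$ (then $\nu$ itself has a non-increasing density, so $n$-CM follows directly from Theorem 2.4 of \cite{WW11}, and \eqref{ns5r} is exactly the mean condition there) and $\alpha=1$ (then the measure is $\delta_x$, trivially $n$-CM, and \eqref{ns5r} forces $y=x$, contradiction unless we interpret it correctly — so I should be slightly careful about whether $\alpha=1$ is admissible, presumably \eqref{ns5r} rules it out since $y>x$).

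For the ``if'' direction, assume \eqref{ns5r}. The idea is that $\alpha\delta_x+(1-\alpha)\nu$ is, up to the atom at $x$, a monotone-density distribution, and Theorem 2.4 of \cite{WW11} characterizes $n$-CM for distributions with a (weakly) monotone density on an interval via the mean inequality. The cleanest route is: write $\eta=\alpha\delta_x+(1-\alpha)\nu$ and observe that $\eta$ can be realized as a mixture (over a parameter) of distributions each of which is either $\delta$ at a point or has a non-increasing density, in such a way that Lemma \ref{ucciard77} applies. Concretely, I would use Example \ref{bh9ik7}: the two-point measure $\frac{k}{n}\delta_x+\frac{n-k}{n}\delta_w$ is $n$-CM for any $x,w$, and a non-increasing density on $[x,y]$ with mean $q$ is $n$-CM by \cite{WW11} precisely when $y+(n-1)x\le nq$, i.e.\ $q\ge x+\frac{y-x}{n}$. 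Decompose $\eta$ as a mixture of such pieces all having the same center $c=\frac{x+\alpha\cdot 0+\dots}{?}$ — here the key computation is that the center of $\eta$, if it is $n$-CM, must be $\frac{\alpha x+(1-\alpha)q\cdot\text{(something)}}{?}$; more precisely, since $\eta$ has finite mean $m=\alpha x+(1-\alpha)q$, Corollary \ref{coro:infinite} and the finite-mean uniqueness remark force the center to be $m/n$ (wait — the center of an $n$-CM measure with finite mean $m$ is $m$, not $m/n$, in the normalization where $\sum X_i = nc$ and $c$ is the mean). So $c=\alpha x+(1-\alpha)q$. Then I need to exhibit, for this specific $c$, a mixture representation $\eta=\int \nu_t\,Q(dt)$ with each $\nu_t\in\mathcal M_n(c)$: split the mass $1-\alpha$ of the density part into a piece that stays a monotone density with the right mean and a piece that gets paired with the atom to form a two-point $n$-CM measure centered at $c$. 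The arithmetic of how much mass to peel off is exactly what makes \eqref{ns5r} the threshold.

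For the ``only if'' direction, assume $\alpha\delta_x+(1-\alpha)\nu$ is $n$-CM; I want \eqref{ns5r}. This should follow from one of the necessary conditions already derived — most likely Corollary \ref{coro:bounds} (the average-quantile bound) or the duality bound $D(c)\ge 1$ at the end of Section~3. The measure has finite mean $m=\alpha x+(1-\alpha)q$, so its only center is $c=m$. I would apply $b\ge c$ is automatic; the useful direction is $c\le b^\ast=\inf_{\alpha'\in(0,1/n)}R_{[(n-1)\alpha',1-\alpha']}(\eta)$, and then let $\alpha'\downarrow 0$. Since $\eta$ has an atom of size $\alpha$ at its left endpoint $x$, the quantile function $q_\eta$ is constant equal to $x$ on $(0,\alpha)$, and for small $\alpha'$ the interval $[(n-1)\alpha',1-\alpha']$ picks up this flat part near $0$ together with the top of $\nu$; computing $\lim_{\alpha'\downarrow0}R_{[(n-1)\alpha',1-\alpha']}(\eta)$ gives $(n-1)x+$ (top endpoint of $\eta$) over $n$... and since $\nu$ has bounded support $[x,y]$, the right endpoint is $y$, yielding the bound $c\le \frac{(n-1)x+y}{n}$ — but that is independent of $\alpha$ and so cannot give \eqref{ns5r}. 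Hence the average-quantile bound is too weak here, and the right tool is the sharper duality/piecewise-linear bound $D(c)\ge1$: pick a suitable $t<c$ (likely $t=x$) in $D(c)=\inf_{t<c}\frac{\int_t^{nc-(n-1)t}\F(z)\,dz}{c-t}$, compute $\int_x^{nc-(n-1)x}\F_\eta(z)\,dz$ using $\F_\eta(z)=\alpha+(1-\alpha)\F_\nu(z)$ on $[x,y]$ and the finite-mean identity $\int_x^\infty\F_\eta=m-x$, and the requirement that this quotient be $\ge1$ at $t=x$ rearranges exactly into \eqref{ns5r}.

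The main obstacle I expect is the ``if'' direction: constructing the explicit mixture decomposition of $\eta$ into $\mathcal M_n(c)$-pieces with the correct total masses, and verifying that \eqref{ns5r} is precisely the condition under which the leftover monotone-density piece still satisfies its own mean inequality $q'\ge x+\frac{y-x}{n}$. This is a bookkeeping argument — deciding how to partition the mass $1-\alpha$ between "pair with the atom into a two-atom $n$-CM measure" and "keep as a monotone density" — but getting the threshold to come out exactly right, especially handling the case where the atom mass $\alpha$ is so large that \emph{all} of $\nu$'s mass must be consumed in pairings, is where the care is needed. The ``only if'' direction, once one commits to the duality bound with the test point $t=x$, should be a short computation.
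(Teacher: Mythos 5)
There is a genuine gap in your ``if'' direction. Your plan is to decompose $\eta=\alpha\delta_x+(1-\alpha)\nu$ into a mixture of pieces all lying in $\mathcal M_n(c)$ with $c=\alpha x+(1-\alpha)q$, pairing part of the atom with part of $\nu$ into ``two-point $n$-CM measures centered at $c$''. But a two-point measure $\tfrac{k}{n}\delta_x+\tfrac{n-k}{n}\delta_w$ with center $c$ forces $w=\frac{nc-kx}{n-k}$, a single point for each $k$; since $\nu$ is absolutely continuous it carries no mass at any such point, so no positive amount of $\nu$ can be ``consumed in pairings'' of this form, and a measure like $\tfrac{n-1}{n}\delta_x+\tfrac1n\sigma$ with $\sigma$ non-degenerate is \emph{not} a single-center mixture of two-point mixes (a uniformly random placement of $W\sim\sigma$ among $n$ slots gives sum $(n-1)x+W$, not a constant). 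So the bookkeeping you defer to cannot be made to close as described. The idea you are missing is a regularization: replace $\delta_x$ by $\U_{[x,x+\epsilon]}$, so that $\gamma_\epsilon=\alpha\,\U_{[x,x+\epsilon]}+(1-\alpha)\nu$ has a genuinely non-increasing density on $[x,y]$ and a mean at least $\alpha x+(1-\alpha)q$; condition \eqref{ns5r} is then exactly the mean inequality for $\gamma_\epsilon$, so Corollary 2.9 of \cite{WW11} gives $\gamma_\epsilon\in\mathcal M_n(\cdot)$, and letting $\epsilon\downarrow0$ and invoking the weak-convergence closure of complete mixability (Theorem 3.1 of \cite{PWW12}) yields $\eta\in\mathcal M_n(\alpha x+(1-\alpha)q)$. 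Without some such limiting device (or a much more elaborate continuum decomposition), your construction does not go through.

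Your ``only if'' direction is essentially right but contains a self-inflicted wrong turn: you dismiss the average-quantile bound as ``independent of $\alpha$'', yet the bound it produces is $nc\ge (n-1)x+y$ with $c=\alpha x+(1-\alpha)q$ depending on $\alpha$, and this rearranges to $n(1-\alpha)(q-x)\ge y-x$, which \emph{is} \eqref{ns5r}. Indeed \eqref{ns5r} is nothing other than the mean inequality of \cite{WW11} written out for $\alpha\delta_x+(1-\alpha)\nu$, which is exactly how the paper disposes of necessity in one line (via Example \ref{ar5cv7}); your fallback to the duality bound $D(c)\ge1$ at $t=x$ also works and delivers the same inequality, so that half is fine, just more roundabout than needed.
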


\begin{proof}
Let $\gamma=\alpha\,\delta_x+(1-\alpha)\,\nu$. As noted in Example \ref{ar5cv7}, a necessary condition for complete mixability is the mean inequality in~\cite{WW11}, and condition \eqref{ns5r} is precisely the mean inequality for $\gamma$. Thus, \eqref{ns5r} holds if $\gamma$ is $n$-CM. Conversely, suppose \eqref{ns5r} holds. Let $\gamma_\epsilon=\alpha\,\U_{[x,\,x+\epsilon]}+(1-\alpha)\,\nu$, where $\epsilon\in (0,\,y-x)$ and $\U_I$ stands for the uniform distribution on the interval $I$. By \eqref{ns5r}, $\gamma_\epsilon$ satisfies the mean inequality. By Corollary 2.9 of \cite{WW11}, since $\gamma_\epsilon$ has non-increasing density and meets the mean inequality, $\gamma_\epsilon$ is $n$-CM. Further, as $\epsilon\rightarrow 0$, the mean of $\gamma_\epsilon$ converges to $\alpha x+(1-\alpha)q$ and $\gamma_\epsilon\rightarrow\gamma$ weakly. Thus, $$\gamma\in\mathcal M_n\bigl(\alpha x+(1-\alpha)q\bigr)$$
because of Theorem 3.1 of \cite{PWW12}.
\end{proof}

We are now ready to prove Theorem \ref{th:cauchy}.

\begin{proof}[\bf Proof of Theorem \ref{th:cauchy}]

Recall that $\mu\in \mathcal M_n(0)$ (see~\cite{RU02b}) for all $n\ge 2$.  Since the case $n=2$ is trivial, we assume $n\ge3$.  Fix $c\in (0, \frac{\log (n-1)}{\pi}].$
By Example \ref{ex:cauchy}, it suffices to show that $\mu\in\mathcal M_n(c)$.
In turn, by Lemma~\ref{ucciard77}, it suffices to prove that $\mu$ can be written as
\begin{equation} \label{eq:mut} \mu=\int\mu_t\,Q( \dif t)\end{equation}
where $\mu_t\in\mathcal M_n(c)$ for all $t>0$ and $Q$ is a probability measure on $(0,\infty)$.

Let $f(x)=\frac{1}{\pi} \frac{1}{1+x^2}$, $x\in\R$, be the standard Cauchy density and $f^{-1}$ the function on $[0, 1/\pi]$ given by
$$f^{-1}(0)=\infty,\quad f^{-1}(x)=\sqrt{\frac{1}{\pi\,x}-1},\quad\quad\text{for }x\in (0, 1/\pi].$$
Also, let $h:(0,\infty)\to \R$ be a $C^1$ function such that, for each $t>0$:
\begin{enumerate}[(I)]
\item   $\int_{c-t}^{c+(n-1)t}  (x-c)\,\{f(x) - h(t)\} _+ \dif x=0,$
\item $0\le h(t)\le  f(c+t)$,
\item $h'(t)\le 0$.
\end{enumerate}
The existence of such $h$ will be verified at the end of the proof. For the moment, we assume that $h$ exists.

For $t>0$, let  $\nu_t$ be the finite measure on $\mathcal{B}$ with density
$$f_t(x) = \bigl\{f(x) - h(t)\bigr\}_+\,\mathbf{1}_{[c-t,c+(n-1)t]}(x).$$
Since $f_t(x)\le f(x)$ and $f(x)=\lim_{t\rightarrow\infty}f_t(x)$ for all $t>0$ and $x\in\R$, one obtains
\begin{gather*}
\lim_{t\rightarrow\infty}\nu_t(B)=\lim_{t\rightarrow\infty}\int_Bf_t(x)\,dx=\int_Bf(x)\,dx=\mu(B)
\end{gather*}
for each $B\in\mathcal{B}$. Note also that $\lim_{t\to 0} \nu_t(B)=0$, for
\begin{gather*}
\limsup_{t\to 0} \nu_t(B)\le\limsup_{t\to 0} \nu_t(\R)\le\limsup_{t\to 0}\int_{c-t}^{c+(n-1)t}f(x)\,dx=0.
\end{gather*}

Denote $$ K_1(t) =  f(c-t) - h(t) ,~~ K_2(t) = (n-1)\,\bigl\{f(c+(n-1)t) -h(t)\bigr\}_+, $$   $$ K_3(t) = \min \{ c+(n-1)t ,\, f^{-1} (h(t)) \}, ~~   K_4(t) = -h'(t)\,\bigl( K_3(t) -(c-t)\bigr).$$
Since $f(c+t) < f(c-t)$, $t>0$, conditions (II)-(III) imply $K_i(t)\ge 0$ for all $i$ with $K_1(t)>0$ and $K_3(t)\ge c+t$. Hence, for each $t>0$, one can define
\begin{gather*}\mu_t = \frac{K_1(t) \delta _{ c-t } +  K_2(t) \delta_{c+(n-1)t} + K_4(t)\,\U_{[c-t, K_3(t) ]}}{K_1(t)+K_2(t)+K_4(t)}
\end{gather*}
where $\U_{[x,y]}$ denotes the uniform distribution on the interval $[x,y]$. Such $\mu_t$ are the probability measures that we use in \eqref{eq:mut}.

Next, fix $y\in\R$ and define
\begin{gather*}
I_y=(-\infty,y),\quad g(t)=\nu_t(I_y),\quad Q_t=K_1(t)+K_2(t)+K_4(t)\quad\text{for each }t>0.
\end{gather*}
Then, $g$ is continuous and satisfies, by direct calculation,
\begin{align*}g'(t)=\frac{\dif }{\dif t} \nu_t(I_y)= Q_t\,\mu_t(I_y)\quad\text{for all }t\in T,\end{align*}
where $T\subset (0,\infty)$ is a co-finite set (possibly depending on $y$). Since $g'$ is locally integrable (with respect to the Lebesgue measure) it follows that
\begin{gather}\label{eq:cauchy3}
\nu_t(I_y)=\lim_{\epsilon\rightarrow 0}\{g(t)-g(\epsilon)\}=\lim_{\epsilon\rightarrow 0}\int_\epsilon^tg'(s)\,ds=\int_0^tQ_s\,\mu_s(I_y)\,ds
\end{gather}
for all $t>0$. In particular,
\begin{gather*}
\int_0^\infty Q_s\,ds=\lim_{t\rightarrow \infty}\int_0^t Q_s\,\Bigl(\lim_{y\rightarrow \infty}\mu_s(I_y)\Bigr)\,ds
=\lim_{t\rightarrow \infty}\lim_{y\rightarrow \infty}\int_0^t Q_s\,\mu_s(I_y)\,ds\\=\lim_{t\rightarrow \infty}\lim_{y\rightarrow \infty}\nu_t(I_y)
=\lim_{t\rightarrow \infty}\nu_t\bigl(\R)=\mu(\R)=1.
\end{gather*}
Let  $Q$ be the probability measure on $(0,\infty)$ such that $Q((0,t])=\int_0^t Q_s\dif s$ for all $t>0$. Then, condition \eqref{eq:cauchy3} yields
\begin{gather*} \int_0^\infty\mu_t(I_y)\,Q(\dif t)=\lim_{s\rightarrow\infty}\int_0^s Q_t\,\mu_t(I_y)\,dt=\lim_{s\rightarrow\infty}\nu_s(I_y)=\mu(I_y)\quad\text{for all }y\in\R.
\end{gather*}
Therefore,
\begin{gather*} \int_0^\infty\mu_t(B)\,Q(\dif t)=\mu(B)\quad\text{for each }B\in \mathcal B.\end{gather*}

To prove $\mu_t\in \mathcal M_n(c)$, it is fundamental to note that $\mu_t$ has mean $c$. Define in fact
\begin{align*}
\phi(t)=\int_{c-t}^{c+(n-1)t}  (x-c)\bigl\{f(x) - h(t)\bigr\} _+ \dif x
&=\int_{c-t}^{K_3(t)}  (x-c)\bigl(f(x) - h(t)\bigr) \dif x  \\
&=\int_{c-t}^{K_3(t)}  (x-c)f(x) \dif x -h(t) \int_{c-t}^{K_3(t)}  (x-c) \dif x.
\end{align*}
By condition (I), $\phi(t)=0$ for all $t>0$. Computing $\phi'(t)$, one obtains
\begin{align*}
0=\phi'(t)&=-t\,K_1(t) + (n-1)\,t\,K_2(t)-h'(t)\int_{c-t}^{K_3(t)} (x-c)\dif x\\&=Q_t \int (x-c)\mu_t (\dif x).
\end{align*}
Therefore, $\mu_t$ has mean $c$ for all $t>0$.

Having noted this fact, fix $t>0$ and define
\begin{gather*}\mu_t ^{(1)} = \frac{n-1}{ n }  \delta _{ c-t }  + \frac{1}{ n } \delta _{ c+(n-1)t }.
\end{gather*}
Such $\mu_t ^{(1)}$ has mean $c$ and is $n$-CM by Example \ref{bh9ik7}. Hence, $\mu_t ^{(1)}\in\mathcal M_n(c)$. If $K_4(t)=0$, then $\mu_t$ is a convex combination of $\delta _{ c-t }$ and $\delta_{c+(n-1)t}$. Since $\mu_t$ has mean $c$, it follows that $\mu_t=\mu_t ^{(1)}\in\mathcal M_n(c)$.

Suppose now that $K_4(t)>0$. Since $K_3(t)\ge c+t$, the mean of $\U_{[c-t, K_3(t) ]}$ is not less than $c$. Since $\mu_t$ has mean $c$, it follows that
$$\frac{K_1(t) \delta _{ c-t } +  K_2(t) \delta_{c+(n-1)t}}{K_1(t)+K_2(t)}$$
has a mean smaller than or equal to $c$, namely, $K_1(t) \ge  (n-1) K_2(t) $. By this fact and $K_4(t)>0$, one can define
\begin{gather*}\mu_t ^{(2)} = \frac{K_1(t) - (n-1)K_2(t)  }{ K_1(t) - (n-1) K_2(t) + K_4(t)}  \delta _{ c-t }  +\frac{K_4(t) }{ K_1(t) - (n-1) K_2(t) + K_4(t) } \U_{[ c-t, K_3(t) ]}.\end{gather*}
Since $\mu_t$ and $\mu_t ^{(1)}$ have mean $c$ and
$$\mu_t = \frac{n K_2(t)}{Q_t}\,\mu_t ^{(1)} + \frac{K_1(t) - (n-1) K_2(t) + K_4(t) }{Q_t}\,\mu_t ^{(2)},$$
then $\mu_t ^{(2)}$ has mean $c$ as well. By Lemma \ref{us45b}, $\mu_t ^{(2)}$ is $n$-CM (condition \eqref{ns5r} follows from $\mu_t ^{(2)}$ having mean $c$). Therefore, $\mu_t ^{(2)}\in\mathcal M_n(c)$. Finally, since $\mu_t ^{(i)}\in\mathcal M_n(c)$ for $i=1,2$ and $\mathcal M_n(c)$ is convex (by Lemma \ref{ucciard77}), one obtains $\mu_t\in\mathcal M_n(c)$.

To conclude the proof, it remains only to prove that a $C^1$-function $h$ satisfying conditions (I)-(II)-(III) actually exists. Define
\begin{equation*}
A(t,y)= \int _{c-t}^{c+(n-1)t} (x-c)  \bigl\{f(x) - y\bigr\}_+ \dif x\quad\text{for all } t > 0\text{ and }y\in\R.
\end{equation*}
Then, $A$ is a  $C^1$-function on $(0,\infty)\times\R$ and
\begin{gather*}
\frac{\partial A}{\partial y} (t,y)=  \int _{c-t}^{c+(n-1)t} - (x-c)  \mathbf{1}_{\{f>y\}}(x)\,dx,
\end{gather*}
where $\{f>y\}$ denotes the set $\{x\in\R:f(x)>y\}$. Fix $t>0$. If $y<f(c+t)$, there is $u\in (c+t,\,c+(n-1)t]$ such that $f(x)>y$ for every $x\in [c-t,u]$. Thus,
\begin{gather*}
\frac{\partial A}{\partial y} (t,y)\le -\int _{c-t}^{u} (x-c)\,dx=-\int _{c+t}^{u} (x-c)\,dx<0.
\end{gather*}
Hence, the map $y\mapsto A(t,y)$ is continuous, strictly decreasing on $(-\infty,\,f(c+t)]$, and
\begin{align*}
\lim_{y\rightarrow -\infty}A(t,y)=\infty, \quad A\bigl(t,\,f(c+t)\bigr)&=\int _{c-t}^{c+t} (x-c)\bigl(f(x) - f(c+t)\bigr)\,dx\\&=\int _{-t}^{t} x f(c+x) \,dx<0.
\end{align*}
It follows that, for each $t>0$, there exists a unique number $h(t)$ satisfying $h(t)<f(c+t)$ and $A\bigl(t,\,h(t)\bigr)=0$. It remains to see that $h\ge 0$, $h$ is $C^1$ and $h'\le 0$.

We begin with $h\ge 0$. Since $A(t,y)>A(t,0)$ whenever $y<0$, it suffices to see that $A(t,0)\ge 0$. Define
\begin{gather*}m(t)=\frac{1}{t}\,\frac{\dif}{\dif t}A(t,0)=(n-1)^2f\bigl(c+(n-1)t\bigr)-f(c-t)\quad\text{for }t>0.
\end{gather*}
Then,
\begin{gather*}m'(t)=(n-1)^3f'\bigl(c+(n-1)t\bigr)+f'(c-t)=(n-1)^3f'\bigl(c+(n-1)t\bigr)-f'(t-c).
\end{gather*}
Observe now that, since $\sqrt{1/f}$ is convex (see Remark~\ref{convex}),
$$\frac{\dif \sqrt{1/f(x)}}{\dif x }= -\frac{1}2\frac{f'(x)}{f^{3/2}(x)}$$
is an increasing function of $x\in \R$. Therefore,
$$\frac{f'(c+(n-1)t)}{\bigl(f(c+(n-1)t)\bigr)^{3/2} } \le  \frac{f'(t-c)}{\bigl(f(t-c)\bigr)^{3/2}}$$
and, by rearranging terms,
\begin{gather}\label{qc57jm90}\frac{f'(t-c)}{(n-1)^3 f'(c+(n-1)t)}\le\Bigl(\frac{f(t-c)}{(n-1)^2 f(c+(n-1)t)}\Bigr)^{3/2}.\end{gather}
If $m(t)\ge 0$, then $(n-1)^2 f(c+(n-1)t)\ge f(c-t)=f(t-c)$, so that the right-hand member of \eqref{qc57jm90} is bounded above by 1. Hence, $m(t)\ge 0$ implies $m'(t)\le 0$. Thanks to this fact and $\lim_{t\downarrow 0}m(t)>0$, one concludes that there is some $t_0>0$ (possibly $t_0=\infty$) such that $m(t)\ge 0$ for $t\le t_0$ and $m(t)\le 0$ for $t\ge t_0$. Since $m(t)$ and $\frac{\dif}{\dif t}A(t,0)$ have the same sign and
\begin{align*} \lim_{t\downarrow 0} A(t,0)=0,\quad\lim_{t\rightarrow \infty} A(t,0) = \lim_{t\rightarrow \infty} \int_{c-t}^{c+(n-1)t} x   f(x) \dif x - c = \frac{\log(n-1)}{\pi} - c \ge 0,\end{align*}
one finally obtains $A(t,0)\ge 0$. Therefore, $h(t)\ge 0$ for all $t>0$.

To prove $h$ is $C^1$, recall that $h(t)$ is the only real number such that $h(t)<f(c+t)$ and $A(t,h(t))=0$. Also, $h(t)<f(c+t)$ implies $\frac{\partial A}{\partial y} (t,h(t))<0$. Thus, $h$ is $C^1$ because of the implicit function theorem.

We finally prove $h'\le 0$. Since $\frac{\partial A}{\partial y} (t,h(t))<0$ and
\begin{gather*}
h'(t)=-\frac{\frac{\partial A}{\partial t} (t,h(t))}{\frac{\partial A}{\partial y} (t,h(t))},
\end{gather*}
it suffices to show that $\frac{\partial A}{\partial t} (t,h(t))\le 0$ for $t>0$. Since
\begin{gather*}
\frac{1}{t}\,\frac{\partial A}{\partial t}(t,h(t))  = (n-1)^2 \bigl\{f(c+(n-1)t) - h(t)\bigr\}_+ - \bigl\{ f(c-t)-h(t) \bigr\}_+,
\end{gather*}
it can be assumed $f(c+(n-1)t)>h(t)$. In this case,
\begin{gather*}
\frac{1}{t}\,\frac{\partial A}{\partial t}(t,h(t))  = (n-1)^2\,f(c+(n-1)t)-f(c-t) - n(n-2)h(t)=m(t)-n(n-2)h(t).
\end{gather*}
Recall now that $m(t)\ge 0$ for $t\le t_0$ and $m(t)\le 0$ for $t\ge t_0$ where $t_0\in (0,\infty]$. Hence, $\frac{\partial A}{\partial t}(t,h(t))\le 0$ for $t\ge t_0$. If $t\in (0,t_0)$, since $t\,m(t)=\frac{\dif}{\dif t}A(t,0)$ and $m$ is decreasing on $(0,t_0)$, then
\begin{gather*}
A(t,0)=\int_0^ts\,m(s)\,ds\ge m(t)\,\int_0^ts\,ds=t^2m(t)/2.
\end{gather*}
On the other hand, it is easily seen that
\begin{gather*}
\frac{\partial A}{\partial y} (t,y)\ge-\int _{c-t}^{c+(n-1)t}(x-c)\,dx= -n(n-2)t^2/2.
\end{gather*}
Therefore,
\begin{gather*}
-A(t,0)=A(t,h(t))-A(t,0)=\int_0^{h(t)}\frac{\partial A}{\partial y} (t,y)\,dy\ge -h(t)\,n(n-2)t^2/2.
\end{gather*}
It follows that $m(t)\le n(n-2)h(t)$, and again $\frac{\partial A}{\partial t}(t,h(t))\le 0$. To summarize, $h'(t)\le 0$ for all $t>0$, namely, $h$ satisfies conditions (I)-(II)-(III). This concludes the proof.
\end{proof}



\begin{remark}\label{convex}
The proof of Theorem \ref{th:cauchy} is valid for other probability measures in addition to the Cauchy distribution.
 Fix in fact a probability measure $\nu$ on $\R$ which admits a density $g$ with respect to the Lebesgue measure. If $g$ is strictly positive, differentiable, symmetric and strictly unimodal (that is, $g'>0$ on $(-\infty,0)$ and $g'<0$ on $(0,\,\infty)$), and if $\sqrt{1/g}$ is a convex function, then any real number $q$ satisfying
\begin{gather*}
\abs{q}\le \liminf_{t\rightarrow \infty} \int_{t}^{(n-1)t} x   g(x) \dif x,
\end{gather*}
is an $n$-center of $\nu$. This follows from replacing $\mu$ and $c$ by $\nu$ and $|q|$, respectively, in the proof of Theorem \ref{th:cauchy}.
%
Therefore, taking an arbitrary differentiable, symmetric, strictly positive and strictly convex function $\phi$, by defining a density $g=\frac{\beta}{\phi^2}$ for some normalizing constant $\beta>0$ one always finds a distribution $\nu$ fulfilling
the requirements of the proof of Theorem~\ref{th:cauchy}.
However, apart from the Cauchy distribution, we do not know of any natural example of such $\nu$. This is due to the convexity of $\sqrt{1/g}$, which is a quite restrictive requirement.
For instance, suppose that $g$ has the power form  $g(x)=\frac{\beta}{1+|x|^\alpha}$ for some constants $\alpha,\beta>0$. In this case, $\alpha=2$ leads to the standard Cauchy distribution, $\alpha>2$ implies that the mean of $\nu$ is finite (so that 0 is the unique center), and $\alpha<2$ implies that $\sqrt{1/g}$ is not convex.
\end{remark}


\bibliographystyle{amsplain}
\bibliography{mybib}

%
%
%
%
%
%
%


\end{document}